\DeclareMathOperator{\csch}{csch}
 \newtheorem{theorem}{Theorem}[section]
\newtheorem{proposition}[theorem]{Proposition}
\newtheorem{remark}[theorem]{Remark}
\newtheorem{lemma}[theorem]{Lemma}
\def\be{\begin{equation}}
\def\ee{\end{equation}}
\def\ben{\begin{displaymath}}
\def\een{\end{displaymath}}
\def\baa{\begin{eqnarray}}
\def\eaa{\end{eqnarray}}
\def\ba{\begin{array}}
\def\ea{\end{array}}
\renewcommand{\leq}{\leqslant}
\renewcommand{\geq}{\geqslant}
\begin{document}
\title{Spectral determinant on Euclidean isosceles triangle envelopes of fixed area as a function of angles:  absolute minimum and small-angle asymptotics}

\author{Victor Kalvin}

\date{}
\maketitle

\begin{abstract}
We study extremal properties of the determinant of Friederichs selfadoint Laplacian on the  Euclidean isosceles triangle envelopes of fixed area as a function of  angles. Small-angle asymptotics show that the determinant grows without any bound as an angle of triangle envelope goes to zero. We prove that the equilateral triangle envelope (the most symmetrical geometry) always gives rise to a critical point of the determinant and find the critical value. Moreover, if the area of envelopes is not too large, then the determinant achieves its absolute minimum only on the equilateral triangle envelope and there are no other critical points, whereas for sufficiently large area the equilateral triangle envelope corresponds  to a local maximum of the determinant.
\end{abstract}

\section{Introduction}\label{Intro}

For surfaces with  smooth varying metrics  extremal properties of determinants of Laplacians and compactness of families of isospectral metrics  were studied by  Osgood, Phillips, and Sarnak in a series of papers~\cite{OPS1,OPS2,OPS3,OPS}, see also~\cite{Sarnak} for a review of the results. 

The case of a surface without boundary  is the most simple: for all (smooth)  metrics in a given conformal class  and of given (fixed) area the determinant attains its unique absolute maximum on 
a unique constant curvature metric, called uniform,  and goes to zero as the metric degenerates~\cite{GIJR,Wo}. 

The case of a surface of type $(p,n)$, obtained by removing  $n>0$ distinct open discs from a closed surface of genus $p$,  is more involved:  for all flat (curvature zero) metrics of given (fixed) total boundary length in a given conformal class the determinant of Dirichlet Laplacian attains its unique absolute maximum on a unique flat metric, also called uniform. It is the one for which the  boundary  has constant geodesic curvature. In the case $p=0$ the determinant goes to zero as the uniform metric degenerates
~\cite{OPS3}, but this is no longer true for higher genus surfaces~\cite{Khuri2}, where for some degenerations the determinant remains bounded by a positive constant from below, or even increases without any bound. In particular, this is why the argument in~\cite{OPS2,OPS3,OPS}  allows to prove compactness of families of  isospectral metrics  in the natural $C^\infty$-topology for $np=0$ but not for $np>0$. 

The isospectral compactness of flat metrics on $(p,n)$-type surfaces with $n>0$ and negative Euler characteristic was demonstrated in~\cite{Kim}, where the total area (instead of total boundary length) of flat metrics is fixed. We also refer to~\cite{KleinKokotkin} for results on extremal properties of the determinant in the Bergman metric on the moduli space of genus two surfaces.

As discussed in~\cite{OPS}, in the case $p=0$, $n\geq 3$ the space of uniform metrics can be identified with a subset of the space $\mathscr C_n$ of conical metrics $m=c^2\prod_{j=1}^n |z-\tau_j|^{2\beta_j}|dz|^2$ on the Riemann sphere $\overline{\Bbb C}=\Bbb C\cup\{\infty\}$. Here $c$ is a scaling factor, the Gauss-Bonnet theorem requires that the orders $\beta_j>-1$ of conical singularities satisfy $\sum_{j=1}^n\beta_j=-2$, by using a suitable M\"obius transformation we always normalize  the (complex) coordinates $\tau_1,\dots,\tau_n$ of conical singularities so that $\tau_1=-1$, $\tau_2=0$, and $\tau_3=1$. 

As is well-known, the spectrum  of the Friederichs self adjoint extension of the Laplace-Beltrami operator $\Delta_m$ on $(\overline{\Bbb C},m)$ is discrete and  the corresponding zeta regularized spectral determinant $\det\Delta_m$ can be introduced in the usual way, e.g.~\cite{Khuri2}. In the same paper~\cite{Khuri2} it was shown that the determinant $\det\Delta_m$, considered as a function on the subspace $\mathscr C_n^*$ of $\mathscr C_n$ consisting of the metrics with fixed orders $\beta_j$ of conical singularities,  is real analytic.  Extremal properties of the determinant $\det\Delta_m$ on the metrics of unit area with four conical singularities of order $-1/2$  were studied in~\cite[Sec 3.5]{KokotkinPyramid}.  We also note that some variational formulas for $\det\Delta_m$ as a function on $\mathscr C^*_n$ can be obtained as an immediate consequence of results in~\cite[Proposition 1]{Kokot},~\cite{KalvinJGA}.

 An explicit expression for $\det\Delta_m$ in terms of the coordinates $c$, $\tau_j$, and $\beta_j$ on $\mathscr C_n$ was found in~\cite{Au-Sal 2} and rigorously proved in~\cite[Sec. 3.2]{Kalvin Pol-Alv}; see also~\cite{Clara-Rowlett}  for the most recent progress towards a rigorous mathematical proof of a similar explicit formula for the determinant of Dirichlet Laplacian on polygons in~\cite{Au-Sal}.

In this paper we study extremal properties of the determinant of the Friederichs selfadjoint extension of the Laplacian on the  Euclidean isosceles triangle envelopes of fixed area as a function of angles. The envelopes are glued from two congruent Euclidean isosceles triangles $ABC$ and $CBA'$ in accordance with the scheme on Fig.~\ref{Template}:
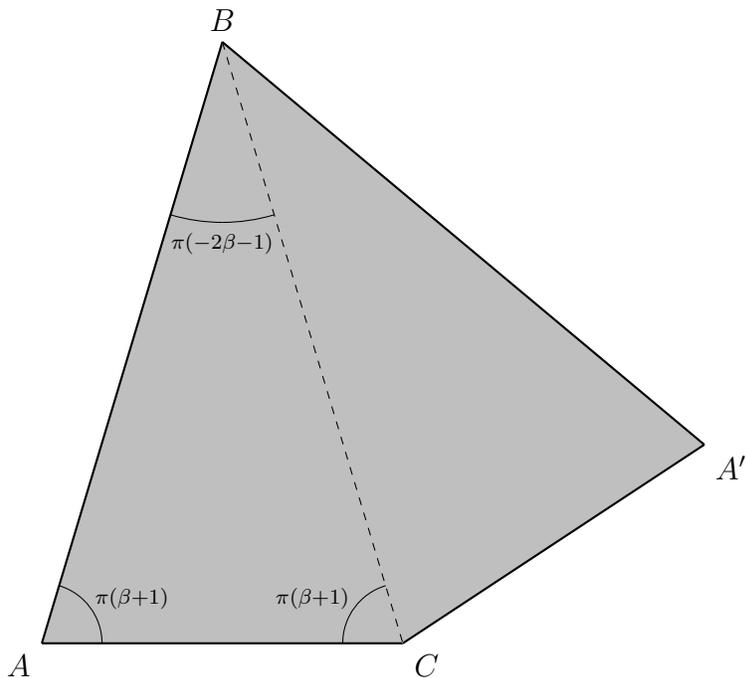
\begin{figure}[h]
\centering\begin{tikzpicture}[scale=0.8]
\draw[gray!50,fill=gray!50] (-3,-10) -- (3,-10) -- (0,0)-- (-3,-10);
\draw[black,solid,thick] (0,0)node[anchor=south]{$B$} -- (-3,-10); 
\draw[black,solid,thick] (-3,-10)node[anchor=north east]{$A$} -- (3,-10)node[anchor=north west]{$C$} ; 
\draw[gray!50,fill=gray!50,rotate=33.4] (-3,-10) -- (3,-10) -- (0,0)-- (-3,-10);
\draw[black,solid,thick,rotate=33.4] (0,0) -- (3,-10)node[anchor=north west]{$A'$}; 
\draw[black,solid,thick,rotate=33.4] (-3,-10) -- (3,-10); 
\draw[black,dashed] (0,0) -- (3,-10); 
\draw[black] (-2,-10) arc (0:73.3:1cm);
\draw[black] (2,-10) arc (180:106.7:1cm);
\draw[black] (0,-3)node[anchor=north]{$\scriptstyle\pi(-2\beta-1)$} arc (270:270-16.7:3cm);
\draw[black] (0,-3) arc (270:270+16.7:3cm);
\draw[black] (-1.5,-8.9)node[anchor=north]{$\scriptstyle\pi(\beta+1)$};
\draw[black] (1.5,-8.9)node[anchor=north]{$\scriptstyle\pi(\beta+1)$};
\end{tikzpicture}
\caption{Euclidean isosceles triangle envelope template with parameter $\beta\in(-1,-1/2)$.  How to make an envelope: $\bf A$.~Fold the polygon along the dashed line;  $\bf B$.~Glue $AC$ to $CA'$ and $AB$ to $BA'$.}
\label{Template}
\end{figure}
The polygon $ABA'C$ is first folded along the line $BC$,  then the side $AC$ is glued to $CA'$ and $AB$ is glued to $A'B$. The internal angles of the triangles are $\pi(\beta+1)$, $\pi(-2\beta-1)$, and $\pi(\beta+1)$. We use the parameter $\beta\in(-1,-1/2)$ as a measure of the angles. 

The  Euclidean isosceles triangle envelopes can equivalently be viewed as the  Riemann sphere  $\overline{\Bbb C}$ equipped with family of  flat metrics $m_\beta=c_\beta^2|z^2-1|^{2\beta}|z|^{-4-4\beta}|dz|^2$ having three  conical singularities. This family forms a subspace in $\mathscr C_3$ consisting of metrics with singularities of orders $\beta_1=\beta_3=\beta$ and $\beta_2=-2-2\beta$ and (fixed) area 
 $$S=c_\beta^2\int_{\Bbb C}|z^2-1|^{2\beta}|z|^{-4-4\beta}\frac {dz\wedge d\bar z}{-2i};$$
the scaling factor $c_\beta>0$  is uniquely determined by the area $S$ of envelope and the value of angle parameter $\beta$. The  Laplace-Beltrami operator $\Delta^S_\beta$  on the sphere $\overline{\Bbb C}$ equipped with the metric $m_\beta$ is nothing but the Euclidean Laplacian on the envelope of total area $S$ with angles prescribed by the values of $\beta$. We pick the Friederichs selfadjoint extension of $\Delta^S_\beta$, which we still denote by $\Delta^S_\beta$.

We show that for the envelopes of  fixed area $S\neq S(\beta)$ the function 
\begin{equation}\label{RAD}
(-1,-1/2)\ni\beta\mapsto\log\det\Delta^S_\beta
\end{equation}
 is real analytic and grows without any bound as $\beta\to-1^+$ or $\beta\to -1/2^-$ (i.e.  as  the envelopes degenerate, or  equivalently, as an internal angle of the triangle $ABC$ on Fig.~\ref{Template} goes to zero). We also  prove that $\beta=-2/3$, which corresponds to the equilateral triangle envelope (the most symmetrical geometry), is a critical point of the function~\eqref{RAD} and the critical value is given by 
 $$
 \log\det\Delta^S_{-2/3}=\frac 2 3 \log \pi+\frac 1 3\log\left( \frac 2 3\right) -2\log\Gamma\left(\frac 2 3 \right)+\frac 1 3 \log S.$$
 Moreover, it turns out that for all not  too large values of the area $S$ (in particular, for $S\leq 1$) the function~\eqref{RAD} achieves its \underline{absolute minimum} only at $\beta=-2/3$ (i.e. only on the equilateral triangle envelope) and there are no other critical points, whereas for sufficiently large values of $S$ (e.g. for $S\geq 2$) the critical point $\beta=-2/3$ corresponds only to a \underline{local maximum} of the function~\eqref{RAD}.  
 
   As an important step of  the proof  we  explicitly express $\log\det\Delta^S_\beta$ as a function of $S$ and $\beta$, which may be of independent interest. This is done by using an original approach based on the Meyer-Vietoris type formula for determinants of Laplacians~\cite{BFK}, formulas for the determinants of Friederichs Dirichlet Laplacians on cones~\cite{Spreafico}, integral representations for Barnes double zeta function~\cite{Spreafico2}, and the Polyakov-Alvarez anomaly formula~\cite{Alvarez,OPS1,OPS}. The Meyer-Vietoris type formula (aka Burghelea-Friedlander-Kappeler or BFK formula) for determinants of Laplacians works very much like a substitution for the insertion lemma in~\cite{OPS,Sarnak,Khuri1,Kim}. 

 We also  illustrate our purely analytical results by graphs based on explicit evaluations of the determinant for rational values of $\beta$ and uniform approximations for the function~\eqref{RAD}, cf.~Fig.~\ref{F1} and Fig.~\ref{EnvCritArea}.
 
To the best of our knowledge  no other analytical results on extremal properties of determinants under variation of angles (or, equivalently, orders of conical singularities) are available yet, except for the one in an earlier work of the author~\cite[Section 3.1]{Kalvin Pol-Alv}, where the determinant on the Riemann sphere equipped with family of singular spherical metrics is  studied as an illustrating example.

This paper is organized as follows. In the next section (Section~\ref{StR}) we state the problem in the most straightforward and naive way and formulate our main results for the Euclidean isosceles envelopes of unit area (Theorem~\ref{main}). In Section~\ref{ExplFla} we present our main technical tool: Proposition~\ref{thm1}, this is where we explicitly express the determinant  as a function of angles. In Section~\ref{Unif} we reformulate the problem in terms of flat singular metrics on the Riemann sphere. We then prove Proposition~\ref{thm1} in Section~\ref{ProofProp}. Section~\ref{Sec3} is devoted to the proof of Theorem~\ref{main}. In Section~\ref{Sec 4} we  demonstrate how the results of Theorem~\ref{main} change when considering the Euclidean isosceles triangle envelopes of non-unit area.  Finally,   Appendix~\ref{APX} contains auxiliary  results on derivatives of Barnes double zeta function that are mainly used in the proof of Theorem~\ref{main}, Section~\ref{Sec3}; derivatives of Barnes double zeta functions first appear in Proposition~\ref{thm1}.

\section{Statement of the problem and main results}\label{StR}

Consider an isosceles triangle envelope of \underline{unit area}  glued from two congruent isosceles triangles $ABC$ and $CBA'$ in accordance with the scheme on Fig.~\ref{Template}. As already discussed in Introduction~\ref{Intro},
the triangles are first glued and folded along the side $BC$,  then the side $AC$ is glued to $CA'$ and $AB$ is glued to $A'B$. The internal angles of the triangles are $\pi(\beta+1)$, $\pi(-2\beta-1)$, and $\pi(\beta+1)$. We shall use the parameter $\beta\in(-1,-1/2)$ as a measure of the angles. The pairwise glued vertices of triangles $ABC$ and $CBA'$ form the vertices  $A$, $B$, and $C$ of the triangle envelope.  

 Let $(x,y)$ be a system of Cartesian coordinates in the Euclidean plane  of the polygon $ABA'C$  on~Fig.~\ref{Template}. Clearly, this induces a  system of coordinates $(x,y)$ and a Euclidean metric on the envelope.  Let  $L^2$ stand for the space    of functions $f$ on the envelope with finite norms 
 $$
 \|f\|=\left(\iint_{ABA'C}|f(x,y)|^2\,dx\,dy\right)^{1/2};
 $$
in particular, $\|1\|=\sqrt{S}=1$. 
 
Consider the Euclidean Laplacian $\Delta_\beta=-\frac {d^2}{dx^2}-\frac {d^2}{dy^2}$ as an unbounded operator  in the Hilbert space $L^2$ initially defined on the functions $u$  that are smooth on the envelope and supported outside of its vertices $A$, $B$, and $C$. The functions $u$ can also be considered as functions in the polygon $ABA'C$. They are the smooth functions  supported outside of the vertices $A,B,A',C$, and such that the value $u(x,y)$ and the values of all derivatives $\frac {\partial^m}{\partial x^m} \frac{\partial^n}{\partial y^n} u(x,y)$ at any point $P=(x,y)$ of the side $AC$ (resp. $AB$) coincide with those at the point $P'=(x',y')$ of  $CA'$ (resp. $BA'$) satisfying $|PA|=|P'A'|$;  here $|XY|$ stands for the Euclidean distance between  $X$ and $Y$.
The points $P$ and $P'$  are getting identified after gluing $AC$ to $CA'$ and $AB$ to $BA'$.

Next we introduce the spectral zeta regularized determinant of $\Delta_\beta$. We only outline the standard  well-known steps and refer to~\cite{Khuri2} for further details (see also Section~\ref{Unif}). 

The operator $\Delta_\beta$  is densely defined, but not essentially selfadjoint.  We consider its Friederichs selfadjoint extension, which we still denote by $\Delta_\beta$ and call the Friederichs Laplacian or Laplacian for short. (The Laplacian $\Delta_\beta$ can also be viewed as an operator of a certain boundary value problem in the polygon $ABA'C$, but we do not discuss this here.)
The spectrum of $\Delta_\beta$ consists of isolated  eigenvalues $0=\lambda_0<\lambda_1\leq\lambda_2\dots$ of finite multiplicity and the determinant $\det \Delta_\beta$ is formally introduced  as their  product  with excluded eigenvalue $\lambda_0=0$. The standard zeta regularization  gives the meaning to the  infinite product:  First the corresponding spectral zeta function is  defined by the equality
 \begin{equation*}
 \zeta_\beta(s)=\sum_{j=1}^\infty\lambda_j^{-s},\quad \Re s>1;
 \end{equation*}
 asymptotics of the spectral counting function of $\Delta_\beta$ guarantees convergence of the series and analyticity of $s\mapsto \zeta_\beta(s)$ in the half plane $\Re s>1$. Then short time heat trace asymptotics are used to demonstrate that  $s\mapsto \zeta_\beta(s)$  has an analytic continuation to a neighbourhood of $s=0$. Finally, the determinant of Friederichs Laplacian $\Delta_\beta$ is rigorously defined in terms of $\zeta_\beta$ as follows: 
 $$
\det\Delta_\beta:=  \exp(-\zeta_\beta'(0)),\quad -1<\beta<-1/2.
$$

We are now in position to  formulate the main results of this paper.
\begin{theorem}[Determinant on  Euclidean  isosceles triangle envelopes of unit area]
\label{main} 
\
\begin{enumerate}

\item The function $(-1,-1/2)\ni\beta\mapsto \log\det\Delta_\beta$ is real analytic.

\item As the isosceles triangle envelope of unit area degenerates (i.e.  as an internal angle of triangle $ABC$ goes to zero, or, equivalently,  as $\beta\to -1^+$ or $\beta\to-1/2^-$), the determinant  $\det \Delta_\beta$ grows without any bound in accordance with  the following  small-angle asymptotics
\begin{equation}\label{As1}
\begin{aligned}
\log \det \Delta_\beta=&-\frac {\log(\beta+1)} {6(\beta+1)}  +\left( \frac 1 6 \log (8\pi)    -4\log A\right)\frac 1 {\beta+1}
\\
&\qquad\quad\ -\log(\beta+1)-\log 2 +O(\beta+1)\quad\text{as} \quad\beta\to-1^+,
\end{aligned}
\end{equation}
\begin{equation}\label{As2}
\begin{aligned}
&\log \det \Delta_\beta =\frac{ \log(-2\beta-1)} {12(2\beta+1)}-\left(\frac 1 6+\frac{\log\pi }{12}-2\log A\right)\frac 1{2\beta+1} 
\\
&\ \  -\frac 3 4 \log(-2\beta-1)-\frac 1 2 \log 2 -\frac 1 4 \log\pi+O(2\beta+1)\quad\text{as} \quad\beta\to-1/2^-,
\end{aligned}
\end{equation}
where $A$ is the Glaisher-Kinkelin constant.

\item The determinant $\det \Delta_\beta$ reaches its  {\underline{absolute minimum}}  only on the equilateral triangle envelope. More precisely, for $-1<\beta<-1/2$ we have 
$$
 \log\det \Delta_\beta\geq \log\det \Delta_{-2/3}=\frac 2 3 \log \pi  +\frac 1 3 \log \frac 2 3      -2\log\Gamma \left(\frac 2 3 \right) 
 $$
with equality iff $\beta=-2/3$. The function $(-1,-1/2)\ni\beta\mapsto \log\det\Delta_\beta$ is concave up, its graph is depicted on Fig.~\ref{F1}. 
\end{enumerate}
\end{theorem}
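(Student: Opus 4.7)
The plan is to derive everything from the explicit formula in Proposition~\ref{thm1}, which expresses $\log\det\Delta^S_\beta$ in closed form in terms of the scaling factor $c_\beta$, Gamma functions, and derivatives at $s=0$ of Barnes double zeta functions with parameters built from $\beta+1$ and $-2\beta-1$. Setting $S=1$ reduces the problem to analyzing a single real function on $(-1,-1/2)$. Part~1 is then essentially immediate: the scaling factor is defined by a convergent area integral depending analytically on $\beta$ in the open interval, derivatives of Barnes double zeta at $s=0$ are analytic in their parameters, and all other factors are manifestly real analytic.

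For part~2, I would expand each ingredient of the formula as $\beta\to-1^+$ or $\beta\to-1/2^-$. The scaling factor $c_\beta$ is driven by the divergence of the area integral $\int_{\mathbb C}|z^2-1|^{2\beta}|z|^{-4-4\beta}\,dx\,dy$ at the conical point whose opening angle tends to zero, producing leading $1/(\beta+1)$ or $1/(2\beta+1)$ behaviour for $\log c_\beta$. The derivatives of Barnes double zeta in the formula, as their parameters approach the degenerate limit, contribute further $1/(\beta+1)$, $\log(\beta+1)$ and constant terms, exactly what Appendix~\ref{APX} is designed to compute. Assembling these pieces and using $\zeta'(-1)=\tfrac{1}{12}-\log A$ to recognize the Glaisher--Kinkelin constant yields~\eqref{As1} and~\eqref{As2}.

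For part~3, the strategy splits into identifying the critical point at $\beta=-2/3$, evaluating the critical value, and establishing strict convexity on $(-1,-1/2)$ for unit area. The equilateral envelope carries a symmetry permuting its three congruent conical points, which forces the derivative of $\log\det\Delta_\beta$ along the one-parameter family of isosceles envelopes to vanish at $\beta=-2/3$; one can also verify this by direct differentiation of the explicit formula. The critical value is then obtained by substituting $\beta=-2/3$ into Proposition~\ref{thm1} and using special values of Barnes double zeta at equal arguments, which reduce to $\Gamma(2/3)$ by classical identities. Once strict convexity (``concave up'') is known, together with the endpoint blow-up from part~2 it forces $\beta=-2/3$ to be the unique critical point and the absolute minimum.

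The main obstacle is establishing strict convexity on the entire interval. The explicit formula is a transcendental combination of Barnes double zeta derivatives whose second $\beta$-derivatives are not obviously signed, so obtaining a clean lower bound on $d^2/d\beta^2\,\log\det\Delta_\beta$ requires careful use of the integral representations from~\cite{Spreafico2} together with the derivative estimates in Appendix~\ref{APX} — almost certainly the technical heart of the proof. A fallback, should global convexity resist direct estimation, is to prove convexity only in a neighbourhood of $\beta=-2/3$ and rule out further critical points by a separate monotonicity argument combined with the asymptotics of part~2.
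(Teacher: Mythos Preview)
Your plan matches the paper's proof almost exactly: Part~1 is immediate from Proposition~\ref{thm1}; Part~2 comes from expanding~\eqref{logdet} using the known small-$a$ asymptotics of $\zeta_B'(0;a,1,1)$ (equation~\eqref{BAS}) together with the expansion of $\log c_\beta$; and Part~3 is handled by direct differentiation of~\eqref{logdet} to locate the critical point, evaluation via special values of the Barnes double zeta at rational parameters, and a proof of strict convexity via the second-derivative bound~\eqref{ZBE2} in Lemma~\ref{BUE}. Two small corrections are worth making.

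First, your description of the $\log c_\beta$ asymptotics is off: from the explicit formula~\eqref{area} one gets $\log c_\beta=\tfrac12\log(\beta+1)+O(1)$ as $\beta\to-1^+$ (and similarly $\tfrac12\log(-2\beta-1)+O(1)$ at the other end), not a $1/(\beta+1)$ divergence. The $1/(\beta+1)$ and $\log(\beta+1)/(\beta+1)$ terms in~\eqref{As1} arise because in~\eqref{logdet} the quantity $\log c_\beta$ is multiplied by a coefficient that itself blows up like $1/(\beta+1)$, and because the Barnes term $\zeta_B'(0;\beta+1,1,1)$ carries a $1/(\beta+1)$ pole via~\eqref{BAS}.

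Second, the symmetry argument for criticality at $\beta=-2/3$ needs more than you state: the $S_3$ action permutes the three cone points, so to conclude the gradient vanishes you need differentiability of the determinant on the full two-parameter family of scalene triangle envelopes, which Proposition~\ref{thm1} does not provide (it is written only on the isosceles slice). The paper instead verifies $\tfrac{d}{d\beta}\log\det\Delta_\beta|_{\beta=-2/3}=0$ by direct computation, using Lemma~\ref{14:34} for the Barnes-zeta contribution and Gauss's digamma theorem for the $c_\beta$ contribution; your fallback ``direct differentiation'' is the actual route taken.
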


As it was already discussed in Introduction~\ref{Intro}, for the Euclidean isosceles triangle envelopes of fixed non-unit  area $S\neq S(\beta)$ the results remain essentially the same provided that the area $S$ is not too large: The equilateral triangle envelope (i.e. the most symmetrical geometry) always gives rise to a critical point of the determinant,  but the critical point turns from the \underline{absolute minimum} to a \underline{local maximum} of the determinant as the area $S$ of the envelopes increases.  We postpone a more detailed discussion to Section~\ref{Sec 4}.

\begin{figure}[h]
 \centering\includegraphics[scale=.8]{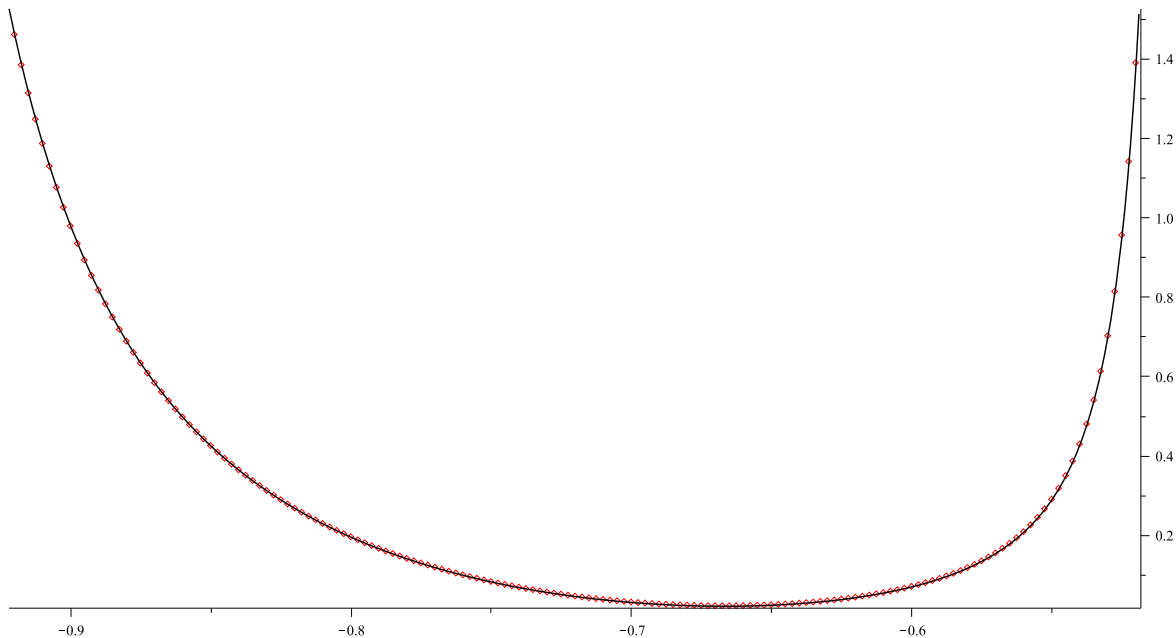}
 \caption{Graph  of the function $(-1,-1/2)\ni\beta\mapsto  \log\det \Delta_\beta$ for the  Friederichs Laplacian $\Delta_\beta$ on isosceles Euclidean triangle envelopes of unit area glued from two copies of  triangle with interior angles $\pi(\beta+1)$, $\pi(-1-2\beta)$, and $\pi(\beta+1)$, cf.~Fig~\ref{Template}.  For the graph we use the representation of $ \log\det \Delta_\beta$ as a function of $\beta$ found in Proposition~\ref{thm1} and  evaluate  $ \log\det \Delta_\beta$ for some rational values of $\beta$ by using Remark~\ref{Barnes} (the points marked with Diamond symbol on the graph), we also uniformly  approximate $ \log\det \Delta_\beta$ by using Proposition~\ref{thm1} together with estimate~\eqref{ZBE1} in Lemma~\ref{BAS}, where we take $N=4$ (solid line).}
 \label{F1}
\end{figure}

\section{Determinant as a function of angles}\label{ExplFla}
Consider the Friederichs Laplacians $\Delta_\beta$ on the Euclidean isosceles triangle envelopes of unit area.  In Proposition~\ref{thm1} below we explicitly express $\log\det \Delta_\beta$ as a function of  $\beta\in(-1,-1/2)$. 
\begin{proposition}[Determinant as a function of angles]\label{thm1}  For the determinant of the Friederichs Laplacian $\Delta_\beta$, $-1<\beta<-1/2$,  on the Euclidean isosceles triangle envelopes of unit area  we have 
\begin{equation}\label{logdet}
\begin{aligned}
\log\det \Delta_\beta=\frac 1 6  \left(2\beta+\frac 4{\beta+1}-\frac 1 {2\beta+1}\right)\log 2-\frac 1 6 \left(\frac 2 {\beta +1} -\frac 1 {2\beta+1 } -1\right)\log c_\beta
\\
-4\zeta'_B(0;\beta+1,1,1)-2\zeta'_B(0;-2\beta-1,1,1)+2\zeta_R'(-1)
\\-\log(\beta+1)-\frac 1 2 \log(-2\beta-1)-\frac {5}{2}\log 2-\log\pi.
\end{aligned}
\end{equation}
Here
 \begin{equation}\label{area}
c_\beta= \frac {2}{\Gamma\left(-\beta-1/  2\right )\Gamma(\beta+1)}  \left (\frac {\pi} {\sin\pi(-2\beta-1)}\right)^{1/2}>0
\end{equation}
is   the scaling factor that comes from uniformization of the envelopes in Section~\ref{Unif},  $\zeta_B$ stands  for the Barnes double zeta function initially defined by the double series
\begin{equation}\label{DoubleSeries}
\zeta_B(s;a,b,x)=\sum_{m,n=0}^\infty(am+bn+x)^{-s},\quad  \Re s>2, a>0, b>0, x\in\Bbb R,
\end{equation}
and then extended by analyticity to the disk $|s|<1$~\cite{Matsumoto,Spreafico2}, $\zeta_R(s)$ is the Riemann zeta function, and the prime in $\zeta'_B$  and $\zeta_R'$ denotes the derivative with respect to $s$. 
\end{proposition}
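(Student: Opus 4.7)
The plan is to follow the program announced in the introduction: use the BFK Meyer--Vietoris type formula to split $\log\det\Delta_\beta$ into cone contributions (evaluated by Spreafico's explicit formula) and a smooth contribution (handled by the Polyakov--Alvarez anomaly formula), then take a controlled limit.

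First, using the uniformization to be set up in Section~\ref{Unif}, I would replace the envelope by the sphere model $(\overline{\Bbb C},m_\beta)$ with $m_\beta=c_\beta^2|z^2-1|^{2\beta}|z|^{-4-4\beta}|dz|^2$. The unit area normalization $c_\beta^{-2}=\int_{\Bbb C}|z^2-1|^{2\beta}|z|^{-4-4\beta}\,\frac{dz\wedge d\bar z}{-2i}$ is a standard Beta-type integral that evaluates to~\eqref{area}. The three conical singularities sit at $z=-1,0,1$ with orders $\beta,\,-2-2\beta,\,\beta$, and the point $z=\infty$ is regular.

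Next, I would cut $\overline{\Bbb C}$ by three small circles of radius $r$ centered at the singularities, obtaining three flat cones $K_1,K_2,K_3$ of total angles $2\pi(\beta+1),\ 2\pi(-2\beta-1),\ 2\pi(\beta+1)$ and a smooth flat remainder $\Omega_r$. The BFK formula then reads schematically
\begin{equation*}
\log\det\Delta_\beta-\log(\mathrm{Area})=\sum_{j=1}^{3}\log\det\Delta^{D}_{K_j}+\log\det\Delta^{D}_{\Omega_r}+\log\det\mathcal N_r+\mathrm{const}_r,
\end{equation*}
with $\mathcal N_r$ the Dirichlet-to-Neumann operator on the three cutting circles. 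Each $\log\det\Delta^{D}_{K_j}$ is then written in closed form by Spreafico's formula, whose output combines $\zeta'_B(0;\beta+1,1,1)$ (twice), $\zeta'_B(0;-2\beta-1,1,1)$ (once), $\zeta'_R(-1)$, and explicit $\log r$ and $\log(\mathrm{angle})$ terms; this is exactly the origin of the $-4\zeta'_B(0;\beta+1,1,1)-2\zeta'_B(0;-2\beta-1,1,1)+2\zeta'_R(-1)$ contribution in~\eqref{logdet}.

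For the smooth piece $\Omega_r$, the two flat metrics $m_\beta$ and $|dz|^2$ are conformally related through $e^{2\phi_\beta}$ with $\phi_\beta=\log c_\beta+\beta\log|z^2-1|-(2+2\beta)\log|z|$. Since both are flat, the Polyakov--Alvarez anomaly formula reduces to explicit boundary integrals of $\phi_\beta$ over the three cut circles, which can be evaluated to leading order in $r$. Finally, letting $r\to 0^+$, the known small-$r$ asymptotics of $\log\det\Delta^{D}_{\Omega_r,|dz|^2}$ and of $\log\det\mathcal N_r$ ensure that all $\log r$ divergences cancel against the $r$-dependence coming from Spreafico and from the boundary integrals, and what survives is~\eqref{logdet}.

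The main obstacle I anticipate is the bookkeeping in this limit: the precise rational coefficients of $\log 2$, $\log c_\beta$, $\log(\beta+1)$, $\log(-2\beta-1)$ and the numerical constant $-\tfrac52\log 2-\log\pi$ in~\eqref{logdet} are fixed by a delicate interplay between Spreafico's normalization of the cone determinant, the multiplicative BFK constant, the $\log(\mathrm{Area})$ factor, and the $\phi_\beta$ boundary terms in Polyakov--Alvarez; checking that every $\log r$ cancels and correctly identifying the $r$-independent residue is the delicate part of the argument.
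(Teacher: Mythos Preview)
Your overall strategy---BFK gluing, Spreafico for the cones, Polyakov--Alvarez for the smooth remainder, then $r\to 0^+$---matches the paper's approach. However, two concrete choices in your sketch would cause trouble, and the paper handles them differently.

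First, your reference metric $|dz|^2$ is not smooth on the remainder $\Omega_r$: the point $z=\infty$ belongs to $\Omega_r$ (it is a regular point of $m_\beta$), but $|dz|^2$ is singular there. So ``$\log\det\Delta^D_{\Omega_r,|dz|^2}$'' is not a standard object, and the Polyakov--Alvarez formula for smooth conformal factors does not apply. The paper instead uses the \emph{round spherical metric} $m=4(1+|z|^2)^{-2}|dz|^2$ as reference; then $\Omega_r$ with metric $m$ is a smooth three-holed sphere, Polyakov--Alvarez is legitimate, and the known value $\det\Delta_{(\overline{\Bbb C},m)}=\exp(\tfrac12-4\zeta_R'(-1))$ provides the anchor.

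Second, you treat $\log\det\mathcal N_r$ as a quantity with ``known small-$r$ asymptotics.'' The paper does not compute it at all. Instead it applies BFK \emph{twice}---once for $(\overline{\Bbb C},m_\beta)$ and once for $(\overline{\Bbb C},m)$---and uses the conformal invariance of $\det\mathcal N/\ell(\partial\Omega_r)$ to cancel the Neumann jump contribution in the ratio. What survives is a product of ratios $\det(\Delta_\beta\!\restriction_X)/\det(\Delta\!\restriction_X)$ over the pieces $X$, each of which is handled by Polyakov--Alvarez (Lemma~\ref{L1}) or by Spreafico plus Polyakov--Alvarez (Lemma~\ref{L2}). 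A minor related point: the paper excises disks of radius $\epsilon$ in the \emph{local isothermal coordinates} $w_\pm,w_0$ rather than in $z$, so that the removed pieces are \emph{exact} cones to which Spreafico's formula applies directly.
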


 Even though the assertion of Proposition~\ref{thm1} can be obtained as a consequence  of  much more general results~\cite[Theorem~1.1 and Proposition~3.3]{Kalvin Pol-Alv} (uniformization in Section~\ref{Unif} below is still needed), we decided to present a complete independent proof. First, because the proof of Proposition~\ref{thm1}  is much more visual, transparent, and  simple  (due to particularly simple geometry of  Euclidean isosceles triangle envelopes: spherical topology, Euclidean metrics, explicit formulas for local holomorphic and geodesic polar coordinates, only one angle parameter  $\beta$, readily available Meyer-Vietoris type formulas, etc.). Second, it makes this paper self-contained and can be of its own interest. We only note that the celebrated partially heuristic Aurell-Salomonson formula for determinants of Laplacians on polyhedra with spherical topology~\cite[(50)]{Au-Sal 2} returns a  result equivalent to the one in Proposition~\ref{thm1}; for details we refer to~\cite[Section 3.2]{Kalvin Pol-Alv}. We also refer to~\cite{Clara-Rowlett}  for the most recent progress towards a rigorous mathematical proof of a similar Aurell-Salomonson formula in~\cite{Au-Sal}, though the results and methods in~\cite{Clara-Rowlett} seem not to be directly related to ours (in the correction notice the authors refer to a pre-print which is not yet available at the time of writing this paper).

We prove  Proposition~\ref{thm1}  in  Section~\ref{ProofProp}, the proof is preceded by  Section~\ref{Unif}, Lemma~\ref{L1}, and Lemma~\ref{L2}.

\section{Uniformization}\label{Unif}
In this section we introduce a singular conformal metric $m_\beta$ on the Riemann sphere $\overline{\Bbb C}$ so that for each $\beta$ the resulting metric sphere $(\overline{\Bbb C},m_\beta)$ is isometric to the corresponding Euclidean isosceles triangle envelope glued in accordance with the scheme on Fig.~\ref{Template}. Then we reintroduce the Laplacian $\Delta_\beta$ as an unbounded selfadjoint operator in the $L^2$-space of  functions on  $\overline{\Bbb C}$. In Section~\ref{StR} this was done in an equivalent naive way, which is good for the statement of the problem and formulation of our main results, but  not suitable for the methods we use in the proof.
 
The Schwarz-Christoffel transformation
\begin{equation*}
w(z)=c_\beta \int_0^{ z}\, (\hat z^2-1)^{\beta} \hat z^{-2-2\beta}  \,d \hat z
\end{equation*}
maps the upper complex half plane $\Bbb H$ onto an isosceles triangle $ABC$ with  interior angles $\pi(\beta+1)$, $\pi(-1-2\beta)$, and  $\pi(\beta+1)$, where $-1<\beta<-1/2$. Later on the scaling factor $c_\beta>0$  will be chosen so that the triangle $ABC$ has an area of $1/2$. 
The pullback of the Euclidean metric by this conformal transformation induces the metric
\begin{equation}\label{metric}
m_\beta=c_\beta^2 |z^2-1|^{2\beta}|z|^{-4-4\beta}\,|dz|^2,\quad -1<\beta<-1/2,
 \end{equation}
  on $\Bbb H$. Similarly, the lower half plane  equipped with the same metric~\eqref{metric} is isometric to a Euclidean triangle congruent to the triangle $ABC$. Thus we obtain the following uniformization of the Euclidean isosceles triangle envelopes of unit area: the extended complex plane $\overline{\Bbb C}=\Bbb C\cup\{\infty\}$ equipped with the metric~\eqref{metric} is isometric to a  triangle envelope glued from two congruent Euclidean isosceles triangles of area $1/2$ in accordance with the scheme on Fig.~\ref{Template}. The vertex $A$ of the envelope corresponds to the point $z=-1$, the vertex $B$ corresponds to $z=0$, and $C$ to $z=1$. The points $0$ and $\pm1$ of $\overline{\Bbb C}$ will also be called vertices.

It is natural to pick the local complex coordinates $z\in\Bbb C$ and $1/z$ near $\infty$ on $\overline{\Bbb C}$. In the  coordinate $z$ the Laplacian $\Delta_\beta$   takes the form
$$
\Delta_\beta= -4 c_\beta^{-2}|z^2-1|^{-2\beta}|z|^{4+4\beta}\partial_{z}\partial_{\bar z},
$$
where $\partial_{z}=\frac{\partial}{\partial z}$ and  $\partial_{\bar z}=\frac{\partial}{\partial\bar z} $ are the complex  derivatives. The change of coordinates  $z\mapsto1/z$ leads to the corresponding representations of the metric $m_\beta$ and the  Laplacian $\Delta_\beta$ in the  local coordinate near $\infty$. 

In a vicinity of the vertex $z=\pm 1$ there is a local complex isothermal coordinate $w_{\pm}$ that brings the metric~\eqref{metric} into the form $m_\beta=|w_\pm|^{2\beta}|dw_\pm|^2$ for $|w_\pm|<\delta$ with some $\delta>0$, see e.g.~\cite{TroyanovSSC},\cite[Lemma 3.4]{Troyanov Polar Coordinates} or~\cite[Section 1]{OPS}. For the holomorphic change of coordinate $w_\pm=w_\pm(z)$ we have
\begin{equation}\label{w_pm}
w_\pm(z)=\left((\beta+1)c_\beta\int_{\pm 1}^z   (\hat z^2-1)^{\beta}\hat z^{-2-2\beta}\,d\hat z\right)^{\frac 1 {\beta+1}},
\end{equation}
where $z$ is the same as in~\eqref{metric}, the sign $+$ (resp. $-$) is taken for the vertex $z=1$ (resp. $z=-1$).  Let us also notice that in the geodesic polar coordinates 
$$
(r,\theta)=\left((\beta+1)^{-1}|w_\pm|^{\beta+1}, \arg w_\pm \right)
$$
 near the vertex $z=\pm1$ the metric takes the form 
$$
m_\beta=d r^2+(1+\beta)^2r^2\, d\theta^2.
$$
The latter one is  the  metric of a standard cone  (surface of revolution) created by rotating an angle of size  $\alpha\in(0,\pi/6)$, $\sin\alpha=\beta+1$,  around one of its rays. 
Similarly, in the local isothermal coordinate
\begin{equation}\label{w_0}
w_0=w_0(z)=\left(- (2\beta+1)c_\beta\int_0^z   (\hat z^2-1)^{\beta}\hat z^{-2-2\beta}\,d\hat z\right)^{-\frac 1 {2\beta+1}}
\end{equation}
in a vicinity of the vertex $z=0$ we have $m_\beta=|w_0|^{-4-4\beta}\,|dw_0|^2$  for $|w_0|<\delta$ with some $\delta>0$.  In the geodesic polar coordinates $$
(r,\theta)=\left((-2\beta-1)^{-1}|w_0|^{-2\beta-1}, \arg w_0 \right)
$$near $z=0$ the metric $m_\beta$ takes the from of a standard cone of vertex angle $\alpha\in(0,\pi/2)$ such that  $\sin\alpha=-1-2\beta$.

The space  $L^2$, considered as a space of functions on $\overline{\Bbb C}$,  has the norm
\begin{equation}\label{L2norm}
\|f\|=c_\beta\left( \int_{\Bbb C}  |z^2-1|^{2\beta}|z|^{-4-4\beta} |f(z,\bar z)|^2  \frac {dz\wedge d\bar z}{-2i}  \right)^{1/2}.
\end{equation}
 The operator $\Delta_\beta$  in the  Hilbert space $L^2$ is initially defined on the smooth functions supported outside of the vertices $z=0$ and $z=\pm1$. This operator is  densely defined but  \underline{not} essentially selfadjoint (due to the singularities at $z=0$ and $z=\pm1$). We consider the Friederichs selfadjoint extension of $\Delta_\beta$, which we still denote by $\Delta_\beta$. Recall that it is the only selfadjoint extension with domain in  $H^1$~\cite{Kato}; here $H^1$ stands for the Sobolev space of all functions $u\in L^2 $ with finite Dirichlet integral  $\|\nabla_\beta u\|^2$, where $\nabla_\beta u$ is the gradient of a function $u$ with respect to the metric~$m_\beta$ and $\|\nabla_\beta u\|^2$ is the integral in~\eqref{L2norm} with $f=\nabla_\beta u$. The well-known definition of the zeta regularized spectral determinant $\det\Delta_\beta$ was already discussed in Section~\ref{StR}; for more details, we refer to~\cite{Khuri2}.
 
 Finally, in order to find the announced in Proposition~\ref{thm1} value of the scaling factor $c_\beta$,  we first note that the length  of the side $AB$ is the metric distance  between the vertices $z=-1$ and $z=0$ (or, what is the same, the length of $BC$ is the metric distance between $z=0$ and $z=1$). The length of $AB$ is given by
$$
|AB|=|BC|=  c_\beta\int_0^1|z^2-1|^{\beta}|z|^{-2-2\beta}\,|dz|= c_\beta \frac {\Gamma\left(-\beta-\frac 1 2\right)\Gamma(\beta+1)}{2\sqrt{\pi}}.
$$
For the total area of the envelope we  thus have
$$
\begin{aligned}
1=2|AB|\cdot|BC|\cdot\sin\frac \pi 2 (-1-2\beta)\cdot\cos\frac \pi 2 (-1-2\beta)\\=  \frac 1 {4\pi}c_\beta^2\Bigl (\Gamma\left(-\beta-1/  2\right )\Gamma(\beta+1)\Bigr)^2\cdot\sin\pi(-2\beta-1),
\end{aligned}
$$
this implies~\eqref{area}.

\section{Proof of Proposition~\ref{thm1}}\label{ProofProp}

 We  need some preliminaries before we can formulate the first lemma preceding the proof of Proposition~\ref{thm1}.

Let $\overline{ \Bbb C}_\epsilon$   (respectively  $\Bbb C_\epsilon$)  stand for the Riemann sphere $\overline{\Bbb C}$  (respectively the complex  plane $\Bbb C$) with three small disks $|w_-|<\epsilon$, $|w_0|< \epsilon$, and $|w_+|< \epsilon$  removed. Here $w_\pm$ and $w_0$ are the local complex isothermal coordinates~\eqref{w_pm} and~\eqref{w_0} in vicinities of the vertices. 
Note that the construction of $\overline{ \Bbb C}_\epsilon$ is nearly identical to the one for $\Sigma_3$ in~\cite[Section 1]{OPS}.  

As it was discussed in Section~\ref{Unif}, any Euclidean isosceles triangle envelope of unit area can  equivalently be viewed as the extended complex plane $\overline {\Bbb C}=\Bbb C\cup\{\infty\}$  (Riemann sphere) endowed with the metric $m_\beta$ in~\eqref{metric}. Notice that
the surface $(\overline{ \Bbb C}_\epsilon,m_\beta)$ with boundary $\partial\Bbb C_\epsilon$ (a  flat  pair of  pants) is isometric to the Euclidean isosceles triangle envelope of unit area with $\epsilon$-cones around the vertices removed (i.e. the surface can be glued from two congruent Euclidean isosceles triangles with excision of circular sectors of radius $\frac 1{\beta+1}\epsilon^{\beta+1}$ at the vertices $A$, $A'$ and $C$, and of radius $-\frac 1{2\beta+1}\epsilon^{-2\beta-1}$ at the vertex $B$, see Fig.~\ref{Template} and Fig.~\ref{Template2}).

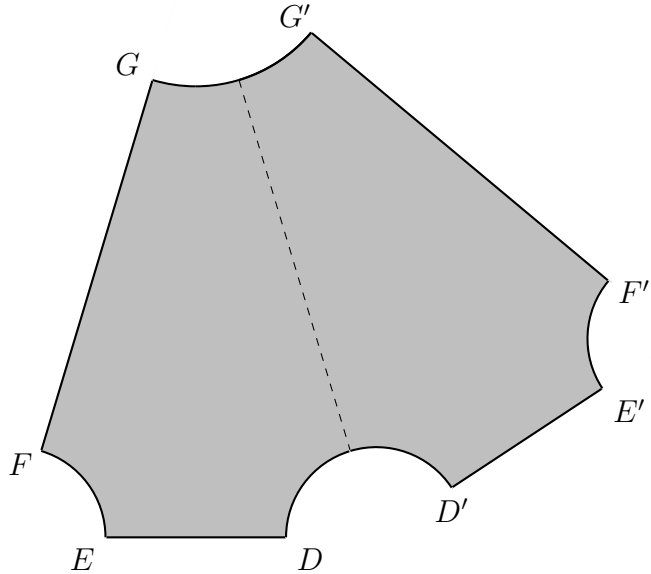
\begin{figure}[h]
\centering\begin{tikzpicture}[scale=0.8]
\draw[gray!50,fill=gray!50] (-3,-10) -- (3,-10) -- (0,0)-- (-3,-10);
\draw[black,solid,thick,rotate=-16.7] (0,-2.5) -- (0,-9); 
\draw[black,solid,thick] (-1.5,-10) -- (1.5,-10); 
\draw[gray!50,fill=gray!50,rotate=33.4] (-3,-10) -- (3,-10) -- (0,0)-- (-3,-10);
\draw[black,solid,thick,rotate=3*16.7] (0,-2.5) -- (0,-9); 
\draw[black,solid,thick,rotate=33.4] (-1.5,-10) -- (1.5,-10); 
\draw[black,dashed,rotate=16.7] (0,-2.5) -- (0,-9); 
\draw[white,fill=white](-3,-10) circle (1.5cm);
\draw[white,fill=white,thick] (-3,-10)--(-1.5,-10) arc (0:73.3:1.5cm)--cycle;
\draw[white,fill=white,thick]  (3,-10)--(1.5,-10) arc (180:106.7:1.5cm)--cycle;
\draw[white,fill=white,thick] (0,0)--(0,-2.5)arc (270:270-16.7:2.5cm)--cycle;
\draw[white,fill=white,thick] (0,0)--(0,-2.5) arc (270:270+16.7:2.5cm)--cycle;
\draw[black,thick] (-1.5,-10) arc (0:73.3:1.5cm);
\draw[black,thick] (1.5,-10) arc (180:106.7:1.5cm);
\draw[black,thick] (0,-2.5)arc (270:270-16.7:2.5cm);

\draw[white,fill=white,thick,rotate=33.4] (-3,-10)--(-1.5,-10) arc (0:73.3:1.5cm)--cycle;
\draw[white,fill=white,thick,rotate=33.4]  (3,-10)--(1.5,-10) arc (180:106.7:1.5cm)--cycle;
\draw[white,fill=white,thick,rotate=33.4] (0,0)--(0,-2.5)arc (270:270-16.7:2.5cm)--cycle;
\draw[white,fill=white,thick,rotate=33.4] (0,0)--(0,-2.5) arc (270:270+16.7:2.5cm)--cycle;
\draw[black,thick] (0,-2.5) arc (270:270+3*16.7:2.5cm);
\draw[black,thick,rotate=33.4] (-1.5,-10) arc (0:73.3:1.5cm);
\draw[black,thick,rotate=33.4] (1.5,-10) arc (180:106.7:1.5cm);
\draw[black,thick,rotate=33.4] (0,-2.5)arc (270:270-16.7:2.5cm);
\draw[black,thick,rotate=33.4] (0,-2.5) arc (270:270+16.7:2.5cm);
\draw[black] (-1.5,-10)node[anchor=north east]{$E$};
\draw[black] (1.5,-10)node[anchor=north west]{$D$};
\draw[black,rotate=33.4] (-1.5,-10)node[anchor=north]{$D'$};
\draw[black,rotate=33.4] (1.5,-10)node[anchor=north west]{$E'$};
\draw[black,rotate=-16.7] (0,-8.8)node[anchor=north east]{$F$};
\draw[black,rotate=3*17] (0,-8.8)node[anchor=north west]{$F'$};
\draw[black,rotate=-16.7] (0,-2.6)node[anchor=south east]{$G$};
\draw[black,rotate=3*17] (0,-2.7)node[anchor=south east]{$G'$};
\end{tikzpicture}
\caption{ How to make a flat pair of pants $(\overline{\Bbb C}_\epsilon,m_\beta)$: $\bf A$.~Cut out four  circular sectors (of radius $\frac 1{\beta+1}\epsilon^{\beta+1}$ at the vertices $A$, $A'$ and $C$, and of radius $-\frac 1{2\beta+1}\epsilon^{-2\beta-1}$ at the vertex $B$) from the Euclidean isosceles triangle envelope template on Fig.~\ref{Template} as shown here; $\bf B$.~Fold along the dashed line;  $\bf C$.~Glue $DE$ to $D'E'$ and $FG$ to $F'G'$.}
\label{Template2}
\end{figure}

By $\Delta_\beta\!\!\restriction_{\overline{ \Bbb C}_\epsilon}$ we denote the selfadjoint Dirichlet Laplacian on $(\overline{ \Bbb C}_\epsilon,m_\beta)$ (since there are no singularities of $m_\beta$ on $\overline{ \Bbb C}_\epsilon$, the Dirichlet Laplacian $\Delta_\beta\!\!\restriction_{\overline{ \Bbb C}_\epsilon}$  initially defined on the smooth functions is  essentially selfadjoint). The operator $\Delta_\beta\!\!\restriction_{\overline{ \Bbb C}_\epsilon}$  is positive and its spectrum consists of discrete eigenvalues of finite multiplicity. Let $\det \Delta_\beta\!\!\restriction_{\overline{ \Bbb C}_\epsilon}$ stand for the zeta regularized spectral determinant of $\Delta_\beta\!\!\restriction_{\overline{ \Bbb C}_\epsilon}$. 

We will also be using the standard spherical metric 
\begin{equation}\label{roundsphere}
m=4(1+|z|^2)^{-2}|dz|^2
\end{equation}
as a reference metric on  $\overline{\Bbb C}$. The surface $(\overline{\Bbb C},m)$ is isometric to a unit sphere in $\Bbb R^3$  (and  $(\overline{ \Bbb C}_\epsilon,m)$ is isometric to a unit sphere  in $\Bbb R^3$ with three holes).   Similarly to the notation for the determinant  generated by the metric $m_\beta$ on $\overline{ \Bbb C}_\epsilon$  introduced above,  we denote by    $\det{\Delta}\!\!\restriction_{\overline{ \Bbb C}_\epsilon}$   the determinant of  the selfadjoint Dirichlet Laplacian ${\Delta}\!\!\restriction_{\overline{ \Bbb C}_\epsilon}$ on $(\overline{ \Bbb C}_\epsilon,m)$.

\begin{lemma}\label{L1} For $-1<\beta<-1/2$ the determinants of the Dirichlet Laplacians   $\Delta_\beta\!\!\restriction_{\overline{ \Bbb C}_\epsilon}$ and ${\Delta}\!\!\restriction_{\overline{ \Bbb C}_\epsilon}$   satisfy
 $$
 \begin{aligned}
\log\frac{ \det (\Delta_\beta\!\!\restriction_{\overline{ \Bbb C}_\epsilon})}{\det ({\Delta}\!\!\restriction_{\overline{ \Bbb C}_\epsilon})}= &\frac {3\beta^2+4\beta} 3 \log \epsilon +\frac {\beta^2+3\beta+1}{3(\beta+1)}\log 2  
\\
&+\frac 1 {6} \left(\frac {2} {\beta+1} -\frac 1 {2\beta+1}+1\right)\log c_\beta-\frac 4 3+o(1)\quad\text{as} \quad \epsilon\to0^+,
\end{aligned}
$$
where $c_\beta$ is the scaling factor found in~\eqref{area}. 
\end{lemma}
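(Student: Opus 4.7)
The plan is to relate the two Dirichlet determinants via the Polyakov--Alvarez conformal anomaly formula, using that on $\overline{\Bbb C}\setminus\{0,\pm 1\}$ the two metrics are conformally related, $m_\beta=e^{2\varphi}m$, with the explicit conformal factor
\[
\varphi(z)=\log\frac{c_\beta}{2}+\beta\log|z^2-1|-(2+2\beta)\log|z|+\log(1+|z|^2).
\]
Since $m_\beta$ has no singularities on the compact surface with boundary $\overline{\Bbb C}_\epsilon$, $\varphi$ is smooth up to the boundary there, and I would apply the Alvarez formula~\cite{Alvarez,OPS1,OPS} for Dirichlet Laplacians,
\begin{equation*}
\log\frac{\det(\Delta_\beta\!\restriction_{\overline{\Bbb C}_\epsilon})}{\det(\Delta\!\restriction_{\overline{\Bbb C}_\epsilon})}=-\frac{1}{6\pi}\int_{\overline{\Bbb C}_\epsilon}\bigl(|\nabla_m\varphi|^2+2K_m\varphi\bigr)\,dA_m-\frac{1}{4\pi}\int_{\partial\overline{\Bbb C}_\epsilon}\bigl(2k_m\varphi+\partial_{n}\varphi\bigr)\,ds_m,
\end{equation*}
with $K_m\equiv 1$ the Gauss curvature, $k_m$ the geodesic curvature, and $\partial_n$ the outward normal derivative taken in the round reference metric $m$.

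Next I would expand each of the four resulting integrals as $\epsilon\to 0^+$. The bulk curvature integral $\int K_m\varphi\,dA_m$ is absolutely convergent in the limit because the logarithmic singularities of $\varphi$ at $z=0,\pm 1$ are integrable against the smooth volume form $dA_m$; it contributes a purely constant term up to $o(1)$ errors coming from the three excised disks. The $\epsilon$-dependence is carried entirely by the Dirichlet energy and the boundary integrals. To evaluate these I would localize near each vertex $z_j\in\{0,\pm 1\}$ by passing to the local isothermal coordinate $w_j$ of~\eqref{w_pm}-\eqref{w_0}; in this coordinate $m_\beta=|w_j|^{2\beta_j}|dw_j|^2$ with $\beta_1=\beta_3=\beta$, $\beta_2=-2-2\beta$, so that
\[
\varphi=\beta_j\log|w_j|+\psi_j(w_j)
\]
with $\psi_j$ smooth up to $w_j=0$ and $\psi_j(0)$ explicitly determined by $c_\beta$ via~\eqref{w_pm}-\eqref{w_0}. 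The elementary integrals on each annulus $\{\epsilon<|w_j|<\delta\}$ then yield the $\log\epsilon$ coefficient (from $|\partial\log|w_j||^2$), the $\log c_\beta$ coefficient (through the constants $\psi_j(0)$ multiplying $\partial_n\varphi$ and through the shift $\log(c_\beta/2)$ in the bulk curvature integral, combined via Gauss--Bonnet bookkeeping), and the $\log 2$ coefficient (through the values $1+|z_j|^2\in\{1,2,2\}$ at the three vertices).

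To organize the bookkeeping I would split $\overline{\Bbb C}_\epsilon=K\cup U_{-1}\cup U_{0}\cup U_{+1}$, where $K$ is a fixed compact piece bounded away from $\{0,\pm 1\}$ and $U_j$ is a small annulus $\{\epsilon<|w_j|<\delta\}$ around each vertex. On $K$ everything is smooth and contributes at most $O(1)$ terms; on each $U_j$ the change of coordinate to $w_j$ converts the integrands to elementary expressions in $|w_j|$ plus smooth remainders that integrate to $o(1)$. The main obstacle will be careful bookkeeping in the Alvarez formula, especially that the boundary circles $|w_j|=\epsilon$ are \emph{not} concentric in the ambient $z$-coordinate and that $ds_m$, $\partial_{n}\varphi$, and $k_m$ are all taken with respect to the round metric $m$, so each must be expanded consistently in $w_j$ (including the $O(\epsilon)$ deviation of the normal to $|w_j|=\epsilon$ from the $z$-radial direction) before integration. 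As a consistency check, the resulting coefficient of $\log\epsilon$ should match the short-time heat-trace contributions of conical singularities of orders $\beta_j$, which on a closed sphere satisfy $\sum_j\beta_j=-2$ by Gauss--Bonnet.
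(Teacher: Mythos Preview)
Your approach is the same as the paper's: apply the Polyakov--Alvarez formula for Dirichlet Laplacians to the pair $m_\beta=e^{2\varphi_\beta}m$ on $\overline{\Bbb C}_\epsilon$, then expand the boundary integrals in the local isothermal coordinates $w_\pm,w_0$ using $\varphi_\beta=\beta_j\log|w_j|-\psi_j(w_j)$ with $\psi_j(0)$ determined from~\eqref{w_pm}--\eqref{w_0}.

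Two points. First, the constants in your displayed anomaly formula are off: the correct Dirichlet version (cf.~\eqref{P_A}) has $-\tfrac{1}{12\pi}$ in front of the bulk integral and $-\tfrac{1}{6\pi}$ in front of $\int k\varphi\,ds$, not $-\tfrac{1}{6\pi}$ and $-\tfrac{1}{2\pi}$; with your coefficients you will not recover the stated $\log\epsilon$, $\log 2$, $\log c_\beta$, and constant terms. Second, the paper avoids your annulus decomposition for the Dirichlet energy by exploiting that $m_\beta$ is flat: then $-\Delta_m\varphi_\beta=K_m=1$ on $\Bbb C_\epsilon$, and Green's formula converts $\int|\nabla\varphi_\beta|^2\,dA$ into $\int\varphi_\beta\,dA+\int\varphi_\beta\,\partial_n\varphi_\beta\,ds$, reducing the bulk to the single absolutely convergent integral $\int_{\Bbb C}\varphi_\beta\,dA_m$, which is evaluated directly. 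This trick makes the bookkeeping considerably lighter than tracking the $\epsilon$-dependence of $\int_{U_j}|\nabla\varphi|^2$ on each annulus.
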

\begin{proof}[Proof of Lemma~\ref{L1}] Introduce the metric  potentials 
\begin{equation}\label{chi}
\chi_\beta(z)=  \log c_\beta +\beta\log |z+1|+\beta\log |z-1|-2(1+\beta)\log |z|,
\end{equation}
\begin{equation}\label{psi}
\psi(z)= \log 2 -\log(1+|z|^2),
\end{equation}
and $\varphi_\beta= \chi_\beta-\psi$. In terms of these potentials we have $m_\beta=e^{2\chi_\beta}|dz|^2$ for the metric~\eqref{metric}, $m=e^{2\psi}|dz|^2$ for the metric~\eqref{roundsphere}, and $m_\beta=e^{2\varphi_\beta}m$. The potential $\psi$ is smooth on $\overline{\Bbb C}$, while $\chi_\beta$ and $\varphi_\beta$ are smooth on $\overline{ \Bbb C}_\epsilon$ (but not on $\overline{\Bbb C}$).

The well-known Polyakov-Alvarez  formula~\cite{Alvarez, OPS1, OPS} implies
\begin{equation}\label{P_A}
\begin{aligned}
\log\frac{ \det (\Delta_\beta\!\!\restriction_{\overline{ \Bbb C}_\epsilon})}{\det ({\Delta}\!\!\restriction_{\overline{ \Bbb C}_\epsilon})}=-\frac{1}{12\pi}\int_{\Bbb C_\epsilon} (|\nabla\varphi_\beta|^2&+2K\varphi_\beta)\,dA
\\
&-\frac{1}{6\pi}\int_{\partial \Bbb C_\epsilon}k\varphi_\beta \,d s -\frac 1 {4\pi}\int_{\partial \Bbb C_\epsilon}{\partial_{ n}}\varphi_\beta\,d s.
\end{aligned}
\end{equation}
Here   $\nabla$ is the gradient, $K=1$ is the Gaussian curvature, $dA$ is the area,  $k$ is the geodesic curvature of the boundary $\partial\Bbb C_\epsilon$,  $\partial_n$ is the outward normal derivative to $\partial\Bbb C_\epsilon$, and $ ds$ is the arc length --- all with respect to  the spherical metric $m$.   

The Liouville's equation for the curvature $K_\beta=0$ of the metric $m_\beta$ on $\Bbb C_\epsilon$ reads $e^{-2\chi_\beta}(-4\partial_z\partial_{\bar z }\chi_\beta)=K_\beta$. Hence $\chi_\beta$ is a harmonic function on $\Bbb C_\epsilon$ and 
\begin{equation}\label{LAP}
-\Delta \varphi_\beta=-e^{-2\psi} (-4\partial_z\partial_{\bar z }\varphi_\beta)=e^{-2\psi} (-4\partial_z\partial_{\bar z }\psi)=1   
\end{equation}
is the curvature of $m$ on  $\Bbb C_\epsilon$.
Now we use Green's formula together with~\eqref{LAP} and write~\eqref{P_A} in the form
\begin{equation}\label{PA}
\begin{aligned}
\log\frac{ \det (\Delta_\beta\!\!\restriction_{\overline{ \Bbb C}_\epsilon})}{\det ({\Delta}\!\!\restriction_{\overline{ \Bbb C}_\epsilon})}=-\frac{1}{12\pi}\left (\int_{\Bbb C_\epsilon} \varphi_\beta\,dA\right. &\left.+ \int_{\partial \Bbb C_\epsilon} \varphi_\beta (\partial_n\varphi_\beta )\,ds\right)
\\
&-\frac {1}{6\pi}\int_{\partial \Bbb C_\epsilon}k\varphi_\beta \,d s -\frac 1 {4\pi}\int_{\partial \Bbb C_\epsilon}\partial_n\varphi_\beta\,d s. 
\end{aligned}
\end{equation}

For the area integral in~\eqref{PA} we have
\begin{equation*}\label{areaint}
\lim_{\epsilon\to 0^+}\int_{\Bbb C_\epsilon} \varphi_\beta\,dA=\int_{\Bbb C} \chi_\beta\,dA-\int_{\Bbb C} \psi\,dA=4\pi\left((\beta-1)\log 2+\log c_\beta+1\right).
\end{equation*}
Indeed, based on~\eqref{chi} and~\eqref{psi} both integrals over $\Bbb C$  can be easily  evaluated  as follows: 
$$
\begin{aligned}
\int_{\Bbb C} \chi_\beta\,dA &
 =\int_{\Bbb C} \chi_\beta (-4\partial_z\partial_{\bar z } \psi) \frac {dz\wedge d\bar z}{-2i} 
\\
&=\lim_{\delta\to 0^+}\left(\oint_{|z+1|=\delta}+\oint_{|z|=\delta} +\oint_{|z-1|=\delta}+\oint_{|z|=1/\delta} \right) (\chi_\beta \partial_{ n}\psi-\psi\partial_{ n}\chi_\beta) |dz|
\\
& =-2\pi \beta\psi(-1) -2\pi(-2-2\beta)\psi(0) -2\pi \beta\psi(1) -2\pi(2\log 2 -2\log c_\beta)  \\&=4\pi\left(\beta  \log 2+\log c_\beta\right),
\\
\int_{\Bbb C} \psi\,dA&=\int_{\Bbb C}\bigl(\log 2 -\log(1+|z|^2)\bigr) \frac {4}{(1+|z|^2)^2}\frac {dz\wedge d\bar z}{-2i} =4\pi (\log 2- 1). 
   \end{aligned}
$$

Next we find the asymptotics of  all other integrals in~\eqref{PA} as $\epsilon\to 0^+$. Since the equations of all three components of the boundary $\partial \Bbb C_\epsilon$ are  written in different local coordinates, it is convenient to treat  the components  in the corresponding local coordinates separately. The computations for all three components are very similar,  we present detailed evaluations only  for the components $|w_\pm|=\epsilon$ encircling the vertices $z=\pm1$.  

Clearly, the geodesic curvature of the circle $|w_\pm|=\epsilon$ with respect to the usual Euclidean metric $|dw_\pm|^2$ is $\epsilon^{-1}$. In order to find the geodesic curvature $k$ with respect to the spherical metric $m=e^{2\psi_\pm}|dw_\pm|^2$  we use the equality 
$$k=-e^{-\psi_\pm}(\epsilon^{-1}+\partial_{|w_\pm|}\psi_\pm),$$
where $\psi_\pm$ is the (smooth) potential of the spherical metric $m$ written in the local holomorphic coordinate $w_\pm$. For the arc length we have $d s= e^{\psi_\pm} \,|d w_\pm|$. We also recall that  $m_\beta=|w_\pm|^{2\beta}|dw_\pm|^2$ and  $\partial_{ n}= e^{-\psi_\pm}\partial_{|w_\pm|}$. Thus for the components of $\partial\Bbb C_\epsilon$  defined by the equations $|w_\pm|=\epsilon$ we obtain
$$
\begin{aligned}
\oint_{|w_\pm|=\epsilon} \varphi_\beta \partial_{ n}\varphi_\beta \,ds
= \oint_{|w_\pm|=\epsilon} \bigl (\beta \log|w_\pm|-\psi_\pm(w_\pm)\bigr)\partial_{|w_\pm|}\bigl(\beta\log|w_\pm|-\psi_\pm(w_\pm)\bigr)\,|dw_\pm|
\\
 =-\int_0^{2\pi}\bigl(\beta\log \epsilon-\psi_\pm(\epsilon e^{i\theta})\bigr)
\left({\beta}/\epsilon+O(1)\right)\epsilon\, d\theta =-2\pi\beta^2\log\epsilon+2\pi\beta\psi_\pm(0)+o(1),
\end{aligned}
$$
$$
\begin{aligned}
\oint_{|w_\pm|=\epsilon}  & k\varphi_\beta \,d s =-\oint_{|w_\pm|=\epsilon}  \bigl(\epsilon^{-1}+\partial_{|w_\pm|}\psi_\pm(w_\pm)\bigr)\bigl(\beta \log \epsilon -\psi_\pm(w_\pm)\bigr)\,|dw_\pm|
\\
& =-2\pi\beta\log\epsilon+2\pi\psi_\pm(0)+o(1),
\end{aligned}
$$
$$
\begin{aligned}
\oint_{|w_\pm|=\epsilon}\partial_{ n}\varphi_\beta \,ds=& \oint_{|w_\pm|=\epsilon} \partial_{|w_\pm|}\bigl(\beta \log|w_\pm|-\psi_\pm(w_\pm)\bigr)\,|dw_\pm|
=-2\pi\beta+o(1).
\end{aligned}
$$
These estimates would be useless without knowing the value of $\psi_\pm$ at zero. Fortunately, it is not hard to show that 
\begin{equation}\label{psi_pm_0}
\psi_\pm(0)=\psi(\pm1)-\log |w'_\pm(\pm1)|=-\frac \beta{\beta+1}\log 2 -\frac 1 {\beta+1}\log c_\beta
\end{equation}
with the metric potential $\psi$ given in~\eqref{psi} and  the function $w_\pm(z)$  given in~\eqref{w_pm}.

In exactly the same way we also find that
$$
\begin{aligned}
\oint_{|w_0|=\epsilon} \varphi_\beta \partial_n\varphi_\beta \,ds
=-8\pi(\beta+1)^2\log\epsilon-4\pi(\beta+1)\psi_0(0)+o(1),
\end{aligned}
$$
$$
\oint_{|w_0|=\epsilon}   k\varphi_\beta \,d s =4\pi(\beta+1)\log\epsilon+2\pi\psi_0(0)+o(1),\quad 
\oint_{|w_0|=\epsilon}\partial_n\varphi_\beta \,ds=4\pi(\beta+1)+o(1),
$$
where for the potential $\psi_0(w_0)$ of the spherical metric $m$ written in the local holomorphic coordinate~$w_0=w_0(z)$ in~\eqref{w_0}  we have
\begin{equation}\label{psi_0_0}
\psi_0(0)=\psi(0) -\log|w_0'(0)|= \log 2 +\frac 1{2\beta+1}\log c_\beta. 
\end{equation}

We have evaluated all integrals in the right hand side of~\eqref{PA}. Combining the results we obtain
 $$
 \begin{aligned}
\log\frac{ \det (\Delta_\beta\!\!\restriction_{\overline{ \Bbb C}_\epsilon})}{\det ({\Delta}\!\!\restriction_{\overline{ \Bbb C}_\epsilon})}= &-\frac {1} 3 \bigl((\beta-1)\log 2+ \log c_\beta+1\bigr)
\\
&+\left(\frac {\beta (\beta+2)} 3\log \epsilon -\frac {\beta+2}3\psi_\pm(0)+\beta\right)
\\
&+\left(\frac {2\beta(\beta+1)} 3 \log\epsilon+\frac {\beta}3\psi_0(0)- (\beta+1)\right)+o(1)\quad\text{as}\quad\epsilon\to0^+
\end{aligned}
$$
with $\psi_\pm(0)$ and $\psi_0(0)$ given in~\eqref{psi_pm_0} and~\eqref{psi_0_0}.  This completes the proof of Lemma~\ref{L1}. 
\end{proof}

In the next lemma  we study the determinants of  Dirichlet Laplacians on the $\epsilon$-cones and spherical $\epsilon$-disks cut out of the triangle envelope $(\overline{\Bbb C},m_\beta)$ and the curvature one sphere $(\overline{\Bbb C},m)$ respectively.
Since the metric $m_\beta$ of the $\epsilon$-cones 
$$
\left(|w_\pm|\leq\epsilon,|w_\pm|^{2\beta}|dw_\pm|^2\right)\quad\text{and}\quad \left(|w_0|\leq \epsilon,|w_0|^{-4-4\beta}|dw_0|^2\right)
$$ 
is singular, the corresponding Dirichlet Laplacians $\Delta_\beta\!\!\restriction_{|w_\pm|\leq \epsilon}$ and $\Delta_\beta\!\!\restriction_{|w_0|\leq \epsilon}$, initially defined on a dense set of smooth functions supported outside of the vertices, are not essentially selfadjoint. As before,  we pick the Friederichs selfadjoint extension. By $\Delta\!\!\restriction_{|w_\pm|\leq \epsilon}$  (resp.  $\Delta\!\!\restriction_{|w_0|\leq \epsilon}$) we denote the selfadjoint Dirichlet Laplacians on the disks $|w_\pm|\leq \epsilon$ (resp. $|w_0|\leq \epsilon$)  endowed with the spherical metric $m$. 

\begin{lemma}\label{L2}  Let $-1<\beta<-1/2$. Then 
\begin{enumerate} \item The determinants of Dirichlet Laplacians $\Delta_\beta\!\!\restriction_{|w_\pm|\leq \epsilon}$ and ${\Delta}\!\!\restriction_{|w_\pm|\leq \epsilon}$   satisfy
$$
\begin{aligned}
&\log\frac{ \det (\Delta_\beta\!\!\restriction_{|w_\pm|\leq \epsilon})}{\det ({\Delta}\!\!\restriction_{|w_\pm|\leq \epsilon})}=-\frac { \beta^2+2\beta} 6\log\epsilon +\frac 1 6\frac {\beta^2-2\beta} {\beta+1}\log 2 -\frac 1 {3(\beta+1)}\log c_\beta
\\
&\ -\frac 5 {12}\beta -\frac 1 2 \log(\beta+1)-2\zeta_B'(0;\beta+1,1,1)
+2\zeta'_R(-1) +o(1)\quad \text{as} \quad \epsilon\to0^+.
\end{aligned}
$$
 \item The determinants of Dirichlet Laplacians $\Delta_\beta\!\!\restriction_{|w_0|\leq \epsilon}$ and ${\Delta}\!\!\restriction_{|w_0|\leq \epsilon}$  satisfy
$$
\begin{aligned}
&\log\frac{ \det (\Delta_\beta\!\!\restriction_{|w_0|\leq \epsilon})}{\det ({\Delta}\!\!\restriction_{|w_0|\leq \epsilon})}= -\frac {2 \beta^2+2\beta} 3 \log\epsilon-\frac {2\beta^2+2\beta+1} {3(2\beta+1)}\log 2+\frac 1{3(2\beta+1)}\log c_\beta
\\ 
&+\frac 5 {6}(\beta+1) -\frac 1 2 \log(-2\beta-1) -2\zeta_B'(0;-2\beta-1,1,1)+2\zeta'_R(-1)+o(1)\   \text{as} \ \epsilon\to0^+.
\end{aligned}
$$
\end{enumerate}

\end{lemma}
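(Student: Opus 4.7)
The plan is to compute each of the two determinants in the ratio separately and then take the quotient. The two main tools are Spreafico's explicit formula~\cite{Spreafico} for the determinant of the Friederichs Dirichlet Laplacian on a flat Euclidean cone and the Polyakov-Alvarez anomaly formula~\eqref{P_A} already used in Lemma~\ref{L1}.

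For the singular cone factors, I would first use the geodesic polar coordinates from the end of Section~\ref{Unif} to identify $(|w_\pm|\leq\epsilon,|w_\pm|^{2\beta}|dw_\pm|^2)$ with the standard Euclidean cone of total angle $2\pi(\beta+1)$ and boundary radius $R_\pm=(\beta+1)^{-1}\epsilon^{\beta+1}$, and likewise $(|w_0|\leq\epsilon,|w_0|^{-4-4\beta}|dw_0|^2)$ with the Euclidean cone of total angle $2\pi(-2\beta-1)$ and boundary radius $R_0=(-2\beta-1)^{-1}\epsilon^{-2\beta-1}$. Spreafico's formula then expresses $\log\det(\Delta_\beta\!\!\restriction_{|w_\pm|\leq\epsilon})$ and $\log\det(\Delta_\beta\!\!\restriction_{|w_0|\leq\epsilon})$ as explicit functions of the angle parameter $\nu$ and radius $R$, in which $\zeta'_B(0;\nu,1,1)$ and $\zeta_R'(-1)$ appear linearly together with explicit polynomial and logarithmic terms in $\nu$ and $\log R$. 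Substituting the above values of $R$ and $\nu$ produces the $\log\epsilon$ coefficients and the $\zeta'_B$ and $\zeta_R'$ contributions that appear in the statement.

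For the spherical-disk factors $(|w_\pm|\leq\epsilon,e^{2\psi_\pm}|dw_\pm|^2)$ and $(|w_0|\leq\epsilon,e^{2\psi_0}|dw_0|^2)$, the conformal factors $\psi_\pm$ and $\psi_0$ are smooth on a neighbourhood of the origin. I would apply Polyakov-Alvarez to compare $\det(\Delta\!\!\restriction_{|w_\pm|\leq\epsilon})$ with the determinant of the flat Dirichlet Laplacian on the Euclidean disk of radius $\epsilon$ with metric $|dw_\pm|^2$, which itself is the $\nu=1$ specialization of Spreafico's formula. The area and boundary integrals of $\psi_\pm$ entering Polyakov-Alvarez shrink with $\epsilon$, and a routine expansion shows that, up to an $o(1)$ error, only the value $\psi_\pm(0)$ from~\eqref{psi_pm_0} survives in the limit; the case of the $|w_0|\leq\epsilon$ disk works identically with $\psi_0(0)$ from~\eqref{psi_0_0}. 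Combining the cone and disk parts, substituting those explicit values, and collecting terms organized by powers of $\log\epsilon$, $\log 2$ and $\log c_\beta$ then yields the two asymptotic identities of the lemma. The main obstacle will be bookkeeping: Spreafico's formula must be invoked in precisely the normalization used here (Friederichs extension, radius measured as the geodesic polar radius rather than the Euclidean coordinate radius), and one has to check carefully that the boundary and regularization contributions from Polyakov-Alvarez on the spherical disk align with those built into the $\nu=1$ instance of Spreafico's formula before the remaining asymptotic expansion becomes routine.
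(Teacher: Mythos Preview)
Your approach is essentially the same as the paper's: compute $\log\det(\Delta_\beta\!\!\restriction)$ on the $\epsilon$-cone via Spreafico's explicit formula, compute $\log\det(\Delta\!\!\restriction)$ on the spherical $\epsilon$-disk by Polyakov--Alvarez against the flat Euclidean disk (whose determinant is the $\nu=1$ case), observe that only $\psi_\pm(0)$ (resp.\ $\psi_0(0)$) survives in the limit, and subtract. One point to watch: Spreafico's formula in~\cite{Spreafico} is not stated directly in terms of $\zeta_B'(0;\nu,1,1)$ but rather in terms of $\zeta_H'(-1,\nu+1)$, $\log\Gamma(\nu+1)$, and an integral $\int_0^\infty\log\frac{\Gamma(\nu(1+iy))}{\Gamma(\nu(1-iy))}\frac{dy}{e^{2\pi y}-1}$; the paper invokes the integral representation of $\zeta_B'$ from~\cite[Proposition~5.1]{Spreafico2} to rewrite this combination as $-2\zeta_B'(0;\beta+1,1,1)$, and you will need the same conversion step before the bookkeeping becomes the routine collection of terms you describe.
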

\begin{proof}[Proof of Lemma~\ref{L2}]  The Polyakov-Alvarez formula~\cite{Alvarez, OPS1,OPS} for  Dirichlet Laplacians in two  metrics on the disk $|w_\pm|\leq \epsilon$ reads:  
\begin{equation}\label{aux}
\begin{aligned}
\log \frac {\det(\Delta\!\!\restriction_{|w_\pm|\leq \epsilon})}{\det(\Delta_\flat\!\!\restriction_{|w_\pm|\leq \epsilon})}= -\frac{1}{6\pi}\left (\frac1 2 \int_{|x|\leq\epsilon} |\nabla_\flat\psi_\pm|^2\,\frac{dw_\pm\wedge d \bar w_\pm}{-2i}+\oint_{|w_\pm|=\epsilon}k_\flat\psi_\pm \,|d w_\pm|  \,\right )
\\
-\frac 1 {4\pi}\oint_{|w_\pm|=\epsilon}(\partial_{n_\flat}\psi_\pm)\,|d w_\pm|.
\end{aligned}
\end{equation}
Here $\psi_\pm$ stands for the potential of the spherical metric $m=e^{2\psi_\pm}|dw_\pm|^2$ written in the local coordinate $w_\pm=w_\pm(z)$ in~\eqref{w_pm} and the symbol $\flat$ refers to the flat metric $|dw_\pm|^2$ in the disk $|w_\pm|\leq \epsilon$. Thus $\nabla_\flat$ is the gradient, $k_\flat=1/\epsilon$ is the geodesic curvature of the circle $|w_\pm|=\epsilon$,  $\partial_{n_\flat}$ is the outer normal derivative,  and $\Delta_\flat\!\!\restriction_{|w_\pm|\leq \epsilon}$ is the selfadjoint Dirichlet Laplacian.   
As is well-known,
$$
\log \det(\Delta_\flat\!\!\restriction_{|w_\pm|\leq \epsilon})=-\frac 1  3 \log\epsilon +\frac 1 3 \log 2-\frac 1 2 \log( 2\pi) -\frac 5 {12} -2\zeta'_R(-1);
$$
 see~e.g.~\cite[first equality in (28)]{Weisberger} or~\cite[Corollary 1]{Spreafico}. 
We note that in the Polyakov-Alvarez formula~\eqref{aux} only the integral involving the geodesic curvature $k_\flat$ gives a nonzero input of $-\frac 1 3 \psi_\pm(0)+o(1)$ as $\epsilon\to0^+$, while all other integrals go to zero.  Taking into account~\eqref{psi_pm_0}  we thus obtain
\begin{equation}\label{sst}
\begin{aligned}
\log\det (\Delta\!\!\restriction_{|w_\pm|\leq \epsilon})=-\frac 1 3\log\epsilon &+ \frac  {2\beta+1}{3(\beta+1)} \log2  +\frac 1 {3(\beta+1)}\log c_\beta
\\
 &-2\zeta'_R(-1) -\frac 5 {12}  -\frac 1 2 \log (2\pi) +o(1) \quad\text{as } \epsilon\to 0^+. 
\end{aligned}
\end{equation}

In order to complete the proof of the first assertion, we also need to know the behaviour of the determinant of the Dirichlet Laplacian $\Delta_\beta\!\!\restriction_{|w_\pm|\leq \epsilon}$ as $\epsilon\to 0^+$. We rely on the explicit formula found in~\cite{Spreafico}. It gives
\begin{equation}\label{SPR1}
\begin{aligned}
\log\det (\Delta_\beta\!\!\restriction_{|w_\pm|\leq \epsilon})=-\frac 1 6 \left(\nu+\frac 1 \nu\right)\log \ell -\nu+1 +\frac 1 6 \nu \log 2 -(7-2\log 2)\frac 1 {12\nu}
\\
-2i\int_0^\infty\log\frac {\Gamma\bigl(\nu(1+iy)\bigr)}{\Gamma\bigl(\nu (1-iy)\bigr)}\frac {dy}{e^{2\pi y}-1}-\log\Gamma\left(\nu+1\right)+\frac 2\nu \zeta'_H(-1, \nu+1),
\end{aligned}
\end{equation}
 where $\nu=(\beta+1)^{-1}$, $\ell=\sqrt{2} (\beta+1)^{-1/2} \epsilon^{\beta+1}$, and $\zeta_H$ stands for the Hurwitz zeta function;  see the last formula in~\cite{Spreafico}. 
As it was  noticed in~\cite[Appendix B]{Klevtsov}, the integral representation
$$\begin{aligned}
\zeta'_B(0;a,b,x)&=\left(-\frac 1 2 \zeta_H\left(0,\frac x a\right)+\frac a b \zeta_H\left(-1,\frac x a \right)-\frac 1 {12}\frac b a\right)\log a +\frac 1 2 \log\Gamma\left(\frac x a\right)
\\
&-\frac 1 4\log(2\pi)-\frac a b \zeta_H\left(-1,\frac x a\right)-\frac a b \zeta'_H\left(-1,\frac x a \right)+i\int_0^\infty\log\frac {\Gamma\left(\frac {x+iby}{a}\right)}{\Gamma\left(\frac {x-iby}{a}\right)}\frac {dy}{e^{2\pi y}-1}
\end{aligned}
$$
from~\cite[Proposition 5.1]{Spreafico2} allows to express the right hand side of~\eqref{SPR1} in terms of the Barnes double zeta function~\eqref{DoubleSeries}. Thus we arrive at the equality
$$
\begin{aligned}
\log\det (\Delta_\beta\!\!\restriction_{|w_\pm|\leq \epsilon})=&-\frac {\beta^2+2\beta+2} 6 \log\epsilon+\frac {\beta^2+2\beta+2} {6(\beta+1)}\log 2 
\\
&-2\zeta_B'(0;\beta+1,1,1)-\frac 5 {12}(\beta+1) -\frac 1 2 \log(\beta+1)-\frac 1 2 \log (2\pi). 
\end{aligned}
$$
 This together with~\eqref{sst} completes the proof of the first assertion. 
 
For the proof of the second assertion we use exactly the same methods and obtain  
 \begin{equation*}\begin{aligned}
\log\det (\Delta\!\!\restriction_{|w_0|\leq \epsilon})=-\frac 1 3\log\epsilon & -\frac 1 3\frac 1{2\beta+1}\log c_\beta
\\
 &-2\zeta'_R(-1) -\frac 5 {12}  -\frac 1 2 \log (2\pi) +o(1) \quad\text{as } \epsilon\to 0^+, 
\end{aligned}
\end{equation*}
$$
\begin{aligned}
\log\det (\Delta_\beta\!\!\restriction_{|w_0|\leq \epsilon})=-\frac 1 6 \left(4\beta^2+4\beta+2\right)\log\epsilon-\frac 1 6 \left(2\beta+1+\frac 1 {2\beta+1}\right)\log 2 
\\
-2\zeta_B'(0;-2\beta-1,1,1)+\frac 5 {12}(2\beta+1) -\frac 1 2 \log(-2\beta-1)-\frac 1 2 \log (2\pi).
\end{aligned}
$$
This implies the second assertion and completes the proof of Lemma~\ref{L2}.
\end{proof}

\begin{proof}[Proof of Proposition~\ref{thm1}]  Here we glue the results of Lemma~\ref{L1} and Lemma~\ref{L2} together by using the Meyer-Vietoris type formula for determinants of Laplacians~\cite{BFK}, widely known as the BFK (Burghelea-Friedlander-Kappeler) formula, see also~\cite{Lee1,Lee2}. For the selfadjoint Laplacian $\Delta$ on the unit sphere  $(\overline{\Bbb C},m)$ the formula reads
\begin{equation}\label{BFK1}
{\det \Delta}=4\pi\det ({\Delta}\!\!\restriction_{\overline{ \Bbb C}_\epsilon})\cdot\det ({\Delta}\!\!\restriction_{|w_+|\leq \epsilon}) \cdot\det (\Delta\!\!\restriction_{|w_0|\leq \epsilon}) \cdot \det ({\Delta}\!\!\restriction_{|w_-|\leq \epsilon})\cdot \frac{\det \mathcal N\!\!\restriction_{\partial\Bbb C_\epsilon}}{\ell(\partial \Bbb C_\epsilon, m)},
\end{equation}
where $4\pi$ is the total area of the unit sphere, $\ell(\partial\Bbb C_\epsilon, m)$ stands for the length of  the boundary $\partial\Bbb C_\epsilon$ in the spherical metric $m$, and $\mathcal N\!\!\restriction_{\partial\Bbb C_\epsilon}$ is the Neumann jump operator on $\partial\Bbb C_\epsilon$ (a first order classical pseudodifferential operator). All other determinants in~\eqref{BFK1} are exactly the same as in Lemma~\ref{L1} and Lemma~\ref{L2}. 

As is well-known, $$
{\det \Delta}=\exp\bigl(1/2-4\zeta_R'(-1)\bigr),
$$
 see e.g.~\cite{OPS1},  and the quotient $\det \mathcal N\!\!\restriction_{\partial\Bbb C_\epsilon}\!\!/\ell(\partial \Bbb C_\epsilon, m)$  in~\eqref{BFK1} is conformally invariant ( as it was noticed in~\cite{Wentworth}, this can be most easily seen  from~\eqref{BFK1} together with Polyakov and Polyakov-Alvarez formulas for the determinants of Laplacians in it; see also~\cite{EW,GG}
 ). 
 
  It is also well-known that the BFK formula  remains valid for the flat singular metrics  if one picks the Friederichs selfadjoint extensions of the corresponding Laplacians (and  there are no singularities on the boundaries): the formula and its deduction hold true without any changes thanks to the same structure of short time heat trace asymptotics. A non-exhaustive list of references where the BFK formula occurs in the context of singular metrics is~\cite{HKK1,HKK2,Kalvin Pol-Alv,Kokot,Kokot2019,Kokotkin,LMP}.  For the Friederichs selfadjoint extension $\Delta_\beta$  the formula reads
$$
{\det \Delta_\beta}=\det ({\Delta_\beta}\!\!\restriction_{\overline{ \Bbb C}_\epsilon})\cdot\det ({\Delta_\beta}\!\!\restriction_{|w_+|\leq \epsilon}) \cdot\det (\Delta_\beta\!\!\restriction_{|w_0|\leq \epsilon}) \cdot \det ({\Delta_\beta}\!\!\restriction_{|w_-|\leq \epsilon})\cdot \frac{\det \mathcal N_\beta\!\!\restriction_{\partial\Bbb C_\epsilon}}{\ell(\partial \Bbb C_\epsilon,m_\beta)},
$$
where the area of envelope does not appear because it  was normalized to $1$ and the value of the quotient $\det \mathcal N_\beta\!\!\restriction_{\partial\Bbb C_\epsilon}\!\!/\ell(\partial \Bbb C_\epsilon, m)$  is the same as the value of the one in~\eqref{BFK1}. (Let us also note that the generalizations of BFK formula to the case of non-Friederichs selfadjoint extensions or non-flat singular metics are way more involved, e.g.~\cite{Kalvin Pol-Alv,LMP}.)  Summing up we come to the equality
$$\begin{aligned}
{{\det \Delta_\beta}}
=\frac {1}{4\pi}& \exp\bigl(1/2-4\zeta_R'(-1)\bigr)
\\
&\times\lim_{\epsilon\to0+}\left(\frac{ \det (\Delta_\beta\!\!\restriction_{\overline{ \Bbb C}_\epsilon})}{\det ({\Delta}\!\!\restriction_{\overline{ \Bbb C}_\epsilon})}\cdot\frac{ \det (\Delta_\beta\!\!\restriction_{|w_-|\leq \epsilon})}{\det ({\Delta}\!\!\restriction_{|w_-|\leq \epsilon})} \cdot\frac{ \det (\Delta_\beta\!\!\restriction_{|w_+|\leq \epsilon})}{\det ({\Delta}\!\!\restriction_{|w_+|\leq \epsilon})}\cdot\frac{ \det (\Delta_\beta\!\!\restriction_{|w_0|\leq \epsilon})}{\det ({\Delta}\!\!\restriction_{|w_0|\leq \epsilon})}\right).
\end{aligned}
$$
In the limit this together with Lemma~\ref{L1} and Lemma~\ref{L2} implies the stated formula~\eqref{logdet} for ${{\log \det \Delta_\beta}}$ and completes the proof of Proposition~\ref{thm1}. 
\end{proof}
\section{Absolute minimum and small-angle asymptotics}\label{Sec3}

\begin{proof}[Proof of Theorem~\ref{main}] 1. The first assertion is a direct consequence of the explicit formula for  $\log\Delta_\beta$  deduced in Proposition~\ref{thm1}, where all terms in the right hand side are real analytic on the open interval $(-1,-1/2)$; for the properties of Barnes double zeta function see e.g.~\cite{Matsumoto,Spreafico2}.

2. The small-angle asymptotic expansions~\eqref{As1} and~\eqref{As2}  readily follow from the explicit formula for  $\log\det\Delta_\beta$  found in Proposition~\ref{thm1}, the well-known asymptotics~\eqref{BAS} for  the derivative of Barnes double zeta function, and the  asymptotics 
$$
\begin{aligned}
\log c_\beta&=\frac 1 2\log(\beta+1)+\frac 1 2 \log 2 -\frac 1 2 \log\pi  -2(\beta+1)\log 2 +O\left( (\beta+1)^2\right)\ \text{as}\ \beta\to-1^+,
\\
\log c_\beta&= \frac 1 2 \log \left(-2\beta-1 \right) -\frac 1 2 \log\pi  + (2\beta+1)\log 2+O\left((2\beta+1)^2\right)\ \text{as}\ \beta\to-1/2^-
\end{aligned}
$$
for the scaling factor~\eqref{area}.

3. Let us first show that $\beta=-2/3$ is a critical point of the right hand side in~\eqref{logdet}. From Lemma~\ref{14:34} that we prove in Appendix~\ref{APX} it follows that 
$$
\frac{d}{d\beta}\bigl\{-4\zeta_B'(0;\beta+1,1,1)-2\zeta_B'(0;-2\beta-1,1,1)\bigr\}=0\quad\text{for}\quad\beta=-2/3.
$$
This together with the formulas~\eqref{logdet} for $\log\det\Delta_\beta$ and~\eqref{area} for the scaling factor $ c_\beta$ gives
$$
\frac{d}{d\beta}\log\det\Delta_\beta\Bigr|_{\beta=-2/3}= \frac 1 6 \left( -16\log 2-8\left(-\Psi\left(\frac 1 3\right) +\Psi\left(\frac 1 6\right )   +\pi\cot\frac \pi 3 \right) \right),
$$
where $\Psi$ is the digamma function and
$$
-\Psi\left(\frac 1 3\right) +\Psi\left(\frac 1 6\right )   +\pi\cot\frac \pi 3=-2\log 2
$$
by the Gauss's digamma theorem.  This demonstrates that 
$$
\frac{d}{d\beta}\log\det\Delta_\beta\Bigr|_{\beta=-2/3}=0,
$$
so $\beta=-2/3$ is a critical point of the function $\beta\mapsto\log\det\Delta_\beta$. 

For the rational values of $a$ the derivative $\zeta'_B(0;a,1,1)$ of the Barnes double zeta can be expressed in terms of $\zeta_R'(-1)$ and gamma functions, see e.g.~\cite{Dowker} and Remark~\ref{Barnes} in Appendix~\ref{APX}. In particular, this allows to find the critical value, cf.~\eqref{CritVal}. 

We have demonstrated that the equilateral triangle envelope (the most symmetrical geometry) gives rise to a critical point of the determinant on the isosceles triangle envelopes of unit area. In the remaining part of the proof we show that it provides the determinant with the absolute minimum and there are no other critical points.

It suffices to show that  the second derivative of the function 
\begin{equation*}
(-1,-1/2)\ni\beta\mapsto\log\det\Delta_\beta
\end{equation*}
is strictly positive. With this aim in mind 
we intend to  approximate  the second derivative of the right hand side in~\eqref{logdet}  by an elementary function.

   We start with the term involving the scaling factor $c_\beta$. Due to~\eqref{area} we have 
\begin{equation}\label{logSa}
-2\log  c_\beta=2\log\Gamma(\beta+1)+2\log \Gamma(-\beta-1/2)+ \log \sin\bigl(\pi(-2\beta-1)\bigr)-\log(4\pi).
\end{equation}
Expanding $\log\Gamma(\beta+2)=\log \Gamma(\beta+1)+\log(\beta+1)$ into the Taylor series we obtain
$$
\log \Gamma(\beta+1)=-\log(\beta+1)  -\gamma (\beta+1)+\sum_{k=2}^\infty \frac{\zeta_R(k)}{k} (-\beta-1)^k,\quad  |\beta+1|<1,
$$
where we can replace $\beta+1$ by $-\beta-1/2$ to get a similar representation for $\log \Gamma(-\beta-1/2)$. 
As a result~\eqref{logSa} takes the form  
$$
\begin{aligned}
-2\log  c_\beta= \log\frac { \sin\bigl(\pi(-2\beta-1)\bigr)}{\pi(\beta+1)^2(2\beta+1)^2}-\gamma+2\sum_{k=2}^\infty \frac{\zeta_R(k)}{k} \Bigl((-\beta-1)^k+ (\beta+1/2)^k\Bigr).
\end{aligned}
$$
Thus for the second derivative of the term involving  $  c_\beta$ we obtain
$$
\begin{aligned}
&-\frac 1 6 \frac {d^2}{d\beta^2}\left(\left(\frac 2 {\beta+1} -\frac 1 {2\beta+1 } -1\right)\log  c_\beta\right)> \frac {d^2}{d\beta^2}\left[\frac 1 {12}\left(\frac 2 {\beta+1} -\frac 1 {2\beta+1 } -1\right)\right.
\\ 
&\qquad\qquad\times\Biggl (  \log\frac { \sin\bigl(\pi(-2\beta-1)\bigr)}{\pi(\beta+1)^2(2\beta+1)^2} -\gamma\left.+2\sum_{k=2}^3 \frac{\zeta_R(k)}{k} \Bigl((-\beta-1)^k+ (\beta+1/2)^k\Bigr)\Biggr) \right]
\\
 \end{aligned}
$$
where the Leibniz's test was implemented 
to estimate the sums of  infinite series on the interval $-1\leq \beta\leq -1/2$. 

Next we consider the terms involving the Barnes double zeta functions. We find suitable approximations for Barnes double zeta function derivatives in Lemma~\ref{BUE}, see Appendix~\ref{APX}.   The inequality~\eqref{ZBE2} from Lemma~\ref{BUE}, where we take $N=2$,  implies
$$
\begin{aligned}
&\frac {d^2}{d\beta^2}\Bigl(-4\zeta'_B(0;\beta+1,1,1) -2\zeta'_B(0;-2\beta-1,1,1)\Bigr)
\\
&\qquad+\left(\frac1 {12} -\zeta'_R(-1)\right)\left(\frac 8 {(\beta+1)^3}-\frac 2 {(\beta+1/2)^3}\right)\geq -\frac {\zeta_R(3)(-3\beta-1)} {15} > -\frac 1 5.
\end{aligned}
$$
In total we get 
\begin{equation}\label{EF}
\begin{aligned}
&\frac {d^2}{d\beta^2} \log\det\Delta_\beta >  \frac {d^2}{d\beta^2}\left[\frac 1{12}\left(\frac 2 {\beta+1} -\frac 1 {2\beta+1 } -1\right) \right.
\\
&\times\Biggl ( \log\frac { \sin\bigl(\pi(-2\beta-1)\bigr)}{\pi(\beta+1)^2(2\beta+1)^2} -\gamma+2\sum_{k=2}^3 \frac{\zeta_R(k)}{k} \Bigl((-\beta-1)^k+ (\beta+1/2)^k\Bigr)\Biggr)
\\
&\left. +\frac 1 6  \left(\frac 4 {\beta+1}-\frac 1{2\beta+1}\right)\log 2+\frac 1 2 \log\frac 1{(\beta+1)^2(-2\beta-1)}\right]
\\& \qquad \qquad\qquad \qquad \qquad\qquad  -\frac 1 5 -\left(\frac1 {12} -\zeta'_R(-1)\right)\left(\frac 8 {(\beta+1)^3}-\frac {2} {(\beta+1/2)^3}\right).
\\
\end{aligned}
\end{equation}
One can check that the elementary function in the right hand side of the latter inequality is strictly positive on the interval $-1<\beta<-1/2$. This is straightforward and  we omit the details; see Fig.~\ref{EFG} for a graph of the elementary function.

\begin{figure}[h]
\centering\includegraphics[scale=.6]{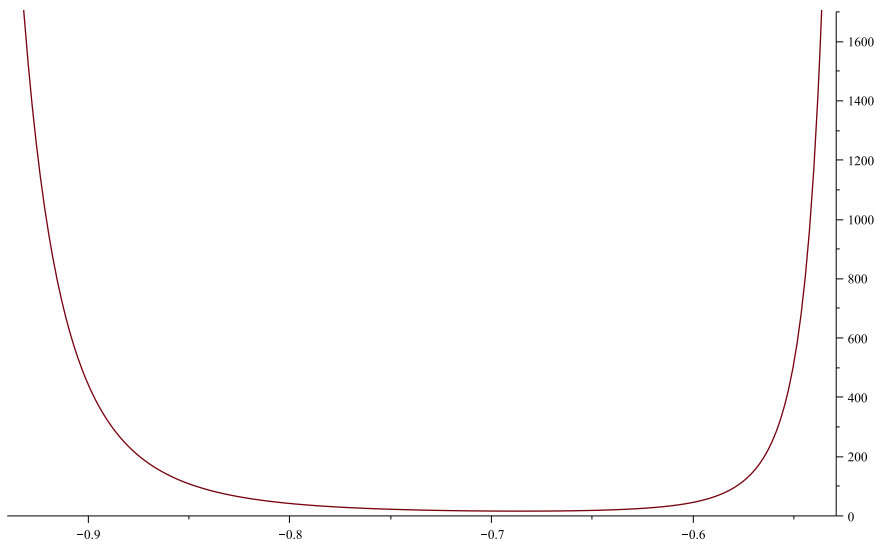}
\caption{Graph of the elementary function in the right hand side of~\eqref{EF}.}
\label{EFG}
\end{figure}

We have demonstrated that the function $\beta\mapsto\log\det\Delta_\beta$ is concave up. This completes the proof of Theorem~\ref{main}.
\end{proof}
 
\section{Determinant on  envelopes of non-unit area}\label{Sec 4}
The Laplacian $\Delta_\beta^{S}$ on the Euclidean isosceles triangle envelopes of  area $S\neq S(\beta)$ is  generated by the metric $m_\beta^S=S\cdot m_\beta$ on $\overline{\Bbb C}$ , where $m_\beta$ is the same as in~\eqref{metric}. 
Since for the Friederichs Laplacians we have $\Delta_\beta^{S}=\frac 1 S\Delta_\beta$, differentiating  the spectral zeta function $\zeta^S_\beta(s)=\sum_{j>0} (\lambda_j/S)^{-s}$ of $\Delta_\beta^{S}$  with respect to $s$ we arrive at the standard rescaling property
\begin{equation}\label{resc1}
\log\det\Delta_\beta^{S}=\log\det\Delta_\beta-\zeta_\beta(0)\log S.
\end{equation}
A more serious task is to find that 
\begin{equation}\label{resc2}
\zeta_\beta(0)=-\frac {13} {12} +\frac 1 {6(\beta+1)}    -\frac 1 {12(2\beta+1)}.
\end{equation}
This can be done either by repeating the steps in the proof of Proposition~\ref{thm1} (now for the metric $m_\beta^S$)  or by relying on a more general result~\cite[Section 1.2]{Kalvin Pol-Alv}.

\begin{remark}\begin{enumerate}\label{Area A}
\item Theorem~\ref{main}.1 together with~\eqref{resc1} and~\eqref{resc2} implies that
 \begin{equation}\label{DetEnvS}
 (-1,-1/2)\ni\beta\mapsto  \log\det\Delta_\beta^{S}
 \end{equation}
 is a real analytic function (for the isosceles triangle envelopes of any  fixed area $S$). 
\item With the help of~\eqref{resc1},~\eqref{resc2} the small-angle asymptotics  for $\log\det\Delta_\beta^{S}$ can be easily obtained from those in Theorem~\ref{main}.2. In particular, it turns out that the determinant  $\det \Delta^S_\beta$ grows without any bound as the isosceles triangle envelope of area $S$ degenerates (i.e.  as an internal angle of triangle $ABC$  of area $S/2$ goes to zero, or, equivalently,  as $\beta\to -1^+$ or $\beta\to-1/2^-$).

\item For  $\zeta_\beta(0)$ in~\eqref{resc2} we have $\frac d {d\beta}\zeta_\beta(0)\bigr|_{\beta=-2/3}=0$ and 
$\zeta_{-2/3}(0)=-\frac 1 3$.
Therefore as an immediate consequence of Theorem~\ref{main}.3 we conclude that $\beta=-2/3$  is a critical point of the function $\beta\mapsto  \log\det\Delta_\beta^{S}$ for any $S>0$.   Thanks to~\eqref{resc1}
for the   critical value we have
$$
\log\det\Delta_{-2/3}^{S}=\frac 2 3 \log \pi  +\frac 1 3 \log \frac 2 3      -2\log\Gamma \left(\frac 2 3 \right) +\frac 1 3 \log S.
$$
Recall that $\beta=-2/3$ corresponds to the equilateral triangle envelope, i.e. to the most symmetrical geometry.

\item Since  $\frac {d^2} {d\beta^2}\zeta_\beta(0)>0$,   it is also true that for any $S\leq 1$ the function~\eqref{DetEnvS} is concave up, has exactly one critical point,  and 
$$
\log\det\Delta_{\beta}^{S}\geq \log\det\Delta_{-2/3}^{S} \quad \text{for}\quad -1<\beta<-1/2
$$
with equality iff $\beta=-2/3$; cf. Theorem~\ref{main}.3. In other words, the determinant of Friederichs Laplacian on the isosceles triangle envelopes of fixed area $S\leq1$ reaches its \underline{absolute minimum}  on the equilateral triangle envelope.  

\item In fact, the estimate~\eqref{EF} together with~\eqref{resc1} and~\eqref{resc2} is also capable of showing that $\beta=-2/3$ still corresponds to the absolute minimum of $\log\det\Delta_{\beta}^{S}$ for the areas $S$ slightly greater than one. However, with further increase of $S$ the value of
$$
\frac {d^2}{d \beta^2}\zeta_\beta(0)\Bigr|_{\beta=-2/3}\log S=27 \log S
$$
becomes greater than 
$$
\frac {d^2}{d\beta^2}\log \det\Delta_\beta\bigr|_{\beta=-2/3}
\approx17.614
$$ 
(approximately for $S>1.92$). As a result, the second derivative  $$\frac {d^2}{d\beta^2} \log\det\Delta^S_\beta\Bigr|_{\beta=-2/3}$$ becomes negative, cf.~\eqref{resc1},  and  the critical point $\beta=-2/3$  turns  into a local maximum  of $\log\det\Delta_\beta^{S}$, see~Fig.~\ref{EnvCritArea}. 

\begin{figure}[h]
\centering\includegraphics[scale=.39]{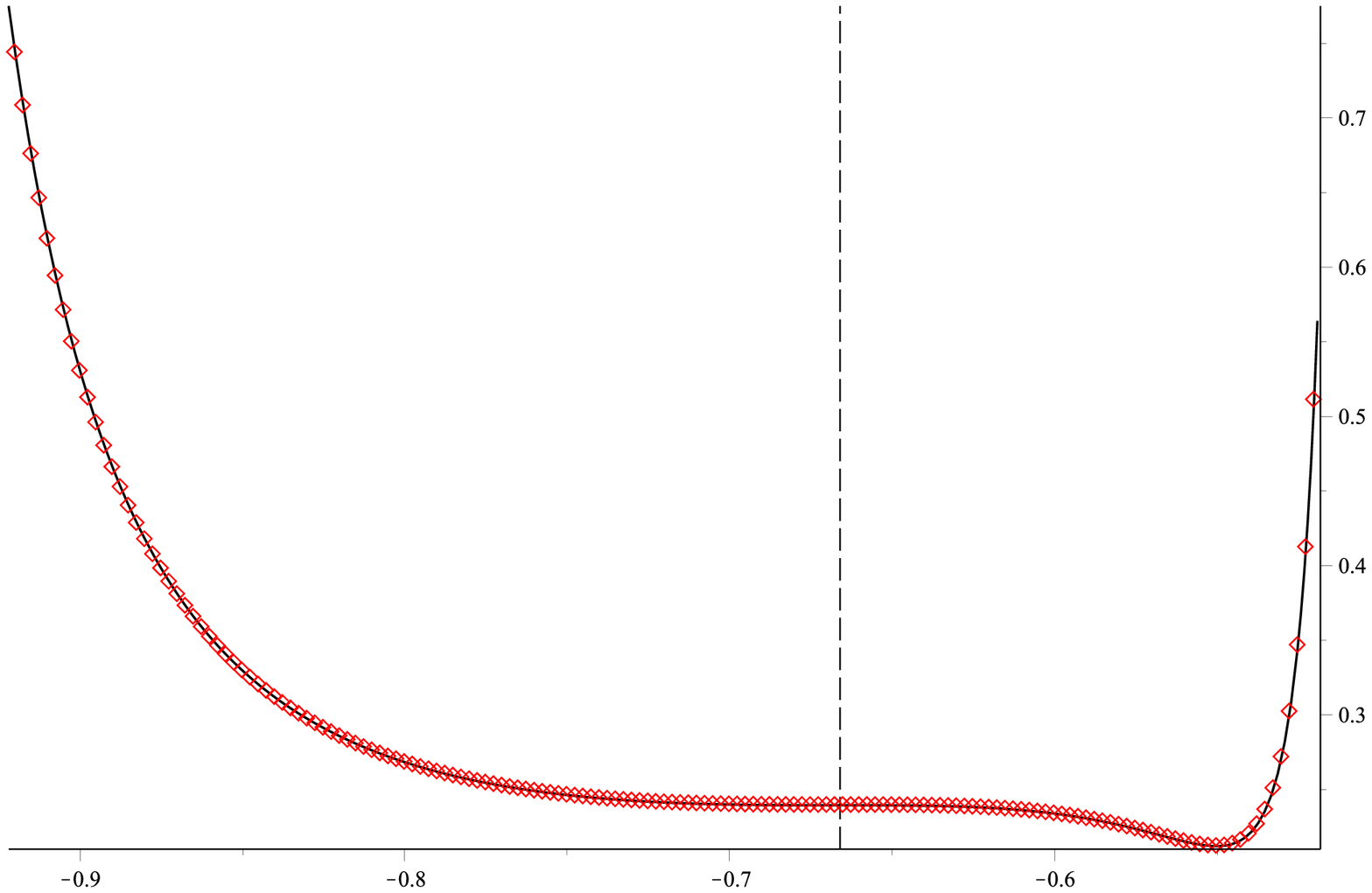}\hspace{5pt}\includegraphics[scale=.39]{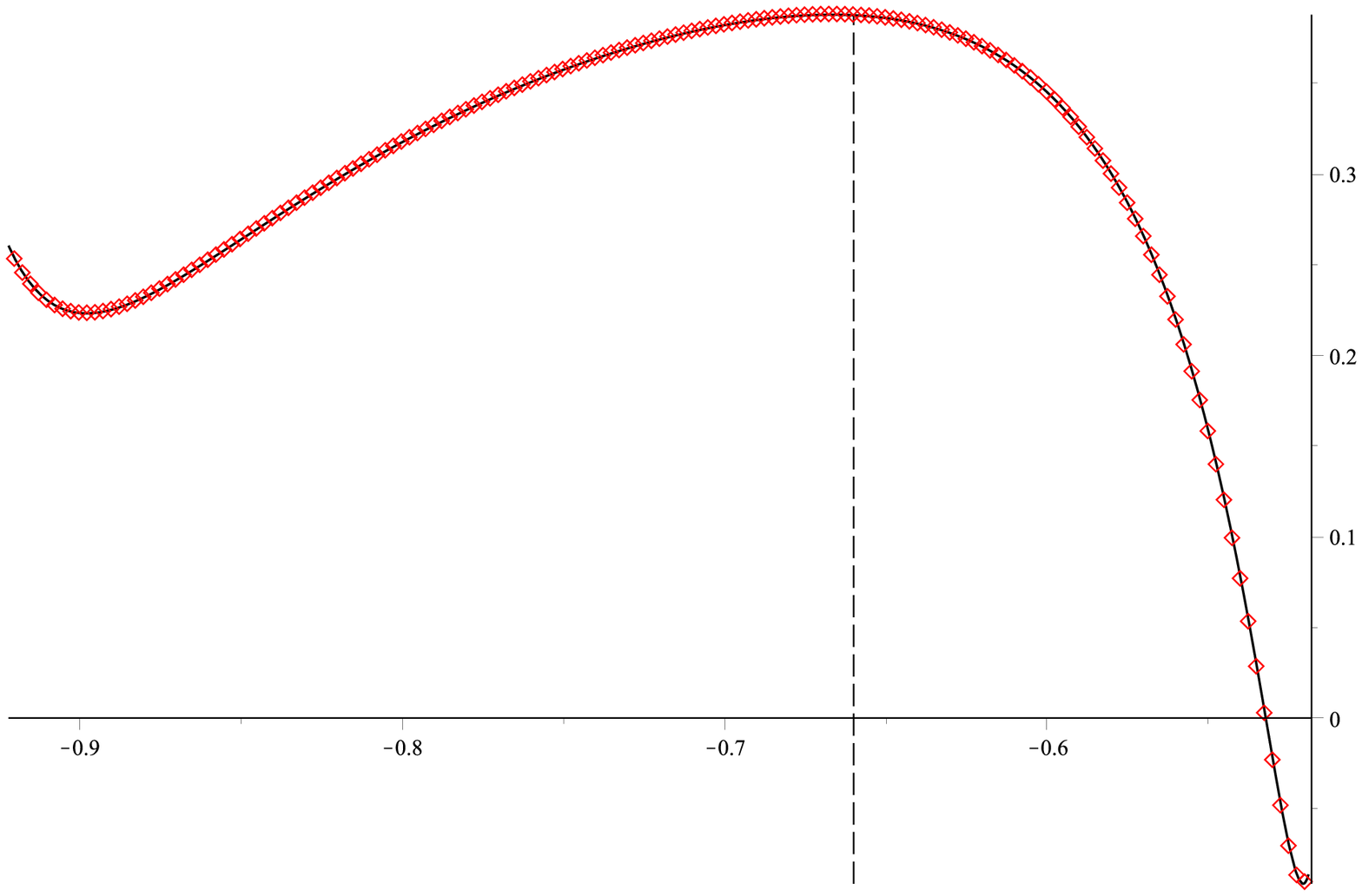}
\caption{Graph of the  function $(-1,-1/2)\ni\beta\mapsto \log\det\Delta^S_\beta$ for  the Euclidean isosceles triangle envelopes of total area $S=1.92$ (on the left) and $S=3$ (on the right). The dashed line corresponds to the critical point $\beta=-2/3$ or, equivalently, to the equilateral triangle envelope (the most symmetrical geometry). For the graphs we use the formula for $ \log\det \Delta_\beta$  found in Proposition~\ref{thm1} together with rescaling formulas~\eqref{resc1},~\eqref{resc2}. We  find  the exact values of $ \log\det \Delta^S_\beta$ for some rational values of $\beta$ by using Remark~\ref{Barnes} (the points marked with Diamond symbol). We also  uniformly approximate $ \log\det \Delta^S_\beta$ by using the estimate~\eqref{ZBE1} in Lemma~\ref{BAS}, where we take $N=4$ (solid line).}
\label{EnvCritArea}
\end{figure}

A similar effect also appears on  a sphere with positive constant curvature (spherical) metrics of normalized area and two antipodal singularities, see~\cite[Section 3.1]{Kalvin Pol-Alv}.
\end{enumerate}
\end{remark}

\appendix

\section{Derivatives of Barnes double zeta function}\label{APX}

The asymptotics 
\begin{equation}\label{BAS}
\begin{aligned}
\zeta'_B(0;a,1,1)=\left(\frac 1 {12} -\zeta'_R(-1)\right)\frac 1 a& -\frac 1 4 \log (2\pi)+\frac {\gamma a}{12} 
\\
&+\sum_{k=2}^\infty\frac {B_{2k}\zeta_R(2k-1)}{2k(2k-1)}a^{2k-1},\quad a\to 0^+,
\end{aligned}
\end{equation}
for the derivative of the Barnes double zeta function~\eqref{DoubleSeries} with respect to its first argument is well known, see e.g.~\cite{Matsumoto,Spreafico2}.  In this paper we use  an improvement of this result, which we formulate and prove in Lemma~\ref{BUE} below.  
\begin{lemma}[Approximations for derivatives of Barnes double zeta function]\label{BUE} For $a>0$  and any $N\geq2$ we have
\begin{equation}\label{ZBE1}
\begin{aligned}
\left |\zeta'_B(0;a,1,1)-\left(\frac 1 {12} -\zeta'_R(-1)\right)\frac 1 a +\frac 1 4 \log (2\pi)-\frac {\gamma a}{12} -\sum_{k=2}^{N-1}\frac {B_{2k}\zeta_R(2k-1)}{2k(2k-1)}a^{2k-1}\right|
\\
\leq  \frac {|B_{2N}\zeta_R(2N-1)|}{2N(2N-1)}a^{2N-1},
\end{aligned}
\end{equation}
\begin{equation}\label{ZBE2}
\begin{aligned}
\left |\frac{\partial^2}{\partial a^2}\zeta'_B(0;a,1,1)-2\left(\frac 1 {12} -\zeta'_R(-1)\right)\frac 1 {a^3} -\sum_{k=2}^{N-1}\left(1-\frac 1 k\right)B_{2k}\zeta_R(2k-1)a^{2k-3}\right|
\\
\leq  \left(1-\frac 1 N\right)|B_{2N}\zeta_R(2N-1)|a^{2N-3};
\end{aligned}
\end{equation}
for $N=2$ the sums with respect to $k$ do not appear. 
Here $\gamma=-\Gamma'(1)$ is the Euler's constant,  $B_{n}$ is the $n$-th  Bernoulli number,  $\zeta_R$ is the Riemann zeta function, and the prime denotes the derivative of $\zeta_B(s;a,b,x)$ with respect to $s$.
\end{lemma}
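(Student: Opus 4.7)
The plan is to prove \eqref{ZBE1} via an explicit integral remainder and then to obtain \eqref{ZBE2} by differentiating the same representation twice in $a$. Both inequalities concern truncating the (divergent) asymptotic series \eqref{BAS}, whose derivation already relies on a Hermite/Binet-type integral representation of $\zeta_B'(0;a,1,1)$ involving $\arctan$-type integrands and the kernel $1/(e^{2\pi t}-1)$ (of the kind appearing in Proposition~5.1 of \cite{Spreafico2} and in Matsumoto's work). The idea is to redo that derivation keeping track of a sharp Taylor remainder.

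First, I would start from a representation of the shape
\begin{equation*}
\zeta_B'(0;a,1,1)=\Bigl(\tfrac{1}{12}-\zeta_R'(-1)\Bigr)\tfrac{1}{a}-\tfrac{1}{4}\log(2\pi)+\tfrac{\gamma a}{12}+\int_0^\infty F(at)\,\frac{dt}{e^{2\pi t}-1},
\end{equation*}
where $F$ is (after a change of variables) an odd, real analytic function whose Taylor coefficients at $0$ match the higher-order terms of \eqref{BAS}. Expanding $F(at)$ as a Taylor polynomial of order $2N-2$ with integral remainder and integrating termwise against $1/(e^{2\pi t}-1)$ reproduces the partial sum on the left-hand side of \eqref{ZBE1}, upon using the classical identities
\begin{equation*}
\int_0^\infty\frac{t^{2k-1}}{e^{2\pi t}-1}\,dt=\frac{|B_{2k}|}{4k},\qquad \Gamma(2k-1)\zeta_R(2k-1)=\int_0^\infty\frac{u^{2k-2}}{e^u-1}\,du.
\end{equation*}
The remainder takes the form $a^{2N-1}\int_0^\infty R_N(at)\,dt/(e^{2\pi t}-1)$. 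Because the underlying expansion of $F$ is a classical alternating series of $\arctan$-type with monotonically decreasing terms, the remainder $|R_N(x)|$ is bounded pointwise by the modulus of the first omitted monomial, and integrating this bound against the positive kernel yields exactly $\tfrac{|B_{2N}\zeta_R(2N-1)|}{2N(2N-1)}a^{2N-1}$.

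For \eqref{ZBE2} I would then differentiate this integral representation twice in $a$. This is legal by dominated convergence, since $1/(e^{2\pi t}-1)$ has exponential decay at infinity and $F$ is entire. The constants $-\tfrac14\log(2\pi)$ and $\tfrac{\gamma a}{12}$ vanish, the term $(\tfrac{1}{12}-\zeta_R'(-1))\tfrac{1}{a}$ becomes $2(\tfrac{1}{12}-\zeta_R'(-1))\tfrac{1}{a^3}$, and
\begin{equation*}
\frac{d^2}{da^2}\left[\frac{B_{2k}\zeta_R(2k-1)}{2k(2k-1)}a^{2k-1}\right]=\left(1-\frac{1}{k}\right)B_{2k}\zeta_R(2k-1)\,a^{2k-3},
\end{equation*}
so the main terms of \eqref{ZBE2} appear with the announced coefficients; the remainder inherits the factor $(1-1/N)$ coming from the same differentiation.

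The main obstacle will be pinning down an integral representation whose Taylor remainder has the precise sign/monotonicity property (alternating series with monotone decreasing terms, as in $\arctan$) needed to secure the constant $|B_{2N}\zeta_R(2N-1)|/(2N(2N-1))$ rather than some larger multiple of it. Once this is in place, both \eqref{ZBE1} and \eqref{ZBE2} reduce to bookkeeping with positive integrals against the exponential kernel $1/(e^{2\pi t}-1)$.
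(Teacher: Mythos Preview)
Your strategy---an integral representation for $\zeta_B'(0;a,1,1)$, Taylor expansion of the integrand with remainder controlled by the Leibniz alternating-series test, then differentiation under the integral sign---is exactly the one the paper uses. The difference is that you leave the identification of $F$ as an unresolved ``main obstacle'', and your choice of kernel $dt/(e^{2\pi t}-1)$ makes that obstacle genuinely hard: with this kernel the integral against $t^{2k-1}$ produces the Bernoulli factor $|B_{2k}|/(4k)$, so the Taylor coefficients of your $F$ would have to carry the factor $\zeta_R(2k-1)/(2k-1)$, and there is no obvious closed-form odd function with those coefficients whose remainder has the needed pointwise monotonicity.

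The paper sidesteps this by swapping the roles of the two classical identities you quote. It uses the kernel $dt/\bigl(t(e^t-1)\bigr)$ and the explicit function
\[
F(r)=\frac{1}{e^{r}-1}-\frac{1}{r}+\frac{1}{2}-\frac{r}{12}=\sum_{k\ge 2}\frac{B_{2k}}{(2k)!}\,r^{2k-1},
\]
arriving (from the representation~\eqref{pst}) at
\[
\zeta_B'(0;a,1,1)-\Bigl(\tfrac{1}{12}-\zeta_R'(-1)\Bigr)\tfrac{1}{a}+\tfrac14\log(2\pi)-\tfrac{\gamma a}{12}
=\int_0^\infty \frac{F(at)}{t(e^t-1)}\,dt.
\]
Now the Bernoulli number sits in the Taylor coefficient of $F$, while the kernel supplies $\int_0^\infty t^{2k-2}/(e^t-1)\,dt=(2k-2)!\,\zeta_R(2k-1)$, and their product is exactly $B_{2k}\zeta_R(2k-1)/\bigl(2k(2k-1)\bigr)$. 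The Leibniz test applies directly to the classical expansion $t/(e^t-1)=\sum B_n t^n/n!$ (equivalently to~\eqref{Jexp1}), giving the sharp constant in~\eqref{ZBE1} without further work. For~\eqref{ZBE2} the paper differentiates under the integral, justified by the elementary bounds $|F'(r)/r|<1$ and $|F''(r)|<1$, and then applies the same Leibniz estimate to $t^2F''(at)=\sum_{k\ge 2}\frac{B_{2k}}{2k(2k-3)!}(at)^{2k-3}t^2$; the factor $(1-1/k)$ appears exactly as you predicted.
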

Recall that  the proof of Theorem~\ref{main}.3 relies on the estimate~\eqref{ZBE2}. The estimate~\eqref{ZBE1} with $N=4$ was only used in order to plot graphs of LogDet function on Fig.~\ref{F1}  and  Fig.~\ref{EnvCritArea}  (solid line); for $N=4$ the right hand side of~\eqref{ZBE1} does not exceed  $5\times 10^{-3} \times a^7$, which gives more than an adequate approximation of $\zeta'_B$ for  the graphs.
 \begin{proof}
We  rely on the  integral representation\footnote{The representation of $\zeta'_B(0;a,1,1)$ in terms of $J(a)$ was  first found in~\cite{Kalvin Pol-Alv}. Other known integral representations of $\zeta'_B(0;a,1,1)$, see e.g.~\cite{Matsumoto,Spreafico2,Noronha} and references therein, are not  good  for our purposes as it is hard to differentiate them with respect to the parameter $a$. To verify the validity of a representation for $\zeta'_B(0;a,1,1)$ it suffices  to check it only for the rational values of $a$ (due to the analytic regularity of $\zeta'_B(0;a,1,1)$ in the half plane $\Re a>0$).  It is known that  for rational values of $a$ the value of $\zeta'_B(0;a,1,1)$ can be expressed in terms of  Riemann zeta and gamma functions, see e.g.~\cite{Dowker} and Remark~\ref{ZB}.  The above representation returns exactly the same result indeed; a detailed evaluation of $J(a)$  for rational values of $a$ can be found in~\cite{Au-Sal}.   }
\begin{equation}\label{pst}
\zeta'_B(0;a,1,1)=\frac 1 {12}\left(a+\frac 1 a \right)(\gamma-\log a)-\frac 1 4\log a+\frac 5 {24} a-\frac 1 4 \log(2\pi)+J(a),
\end{equation}
where $\gamma=-\Gamma'(1)$ and 
$$
J(a)=\int_0^\infty\frac 1 {e^t-1}\left[\frac 1 {2t}\coth\frac t {2a}-\frac a 4 \csch^2\frac t 2 -\frac 1 {12} \left(a+\frac 1 a \right)\right]\,dt.
$$ 
We have
\begin{equation}\label{Jexp}
J(a)=\int_0^\infty\left( \frac t {e^t-1}\frac 1{t^2(e^{t/a}-1)}+ \frac 1 {e^t-1}\left[\frac 1 {2t}-\frac a 4 \csch^2\frac t 2 -\frac 1 {12} \left(a+\frac 1 a \right)\right]\right)\,dt.
\end{equation}
Expanding the factor $ \frac t {e^t-1}$ into the Taylor series and applying the  Leibniz's test we obtain
\begin{equation}\label{Jexp1}
\left| \frac t {e^t-1} -1+\frac 1 2 t-\frac 1 {12} t^2-\sum_{k=2}^{N-1} \frac {B_{2k}}{(2k)!} t^{2k}\right|\leq   \frac {|B_{2N}|}{(2N)!} t^{2N},\quad t\geq 0.
\end{equation}
It is not hard to check that 
\begin{equation}\label{Jexp2}
\begin{aligned}
\int_0^\infty\left( \left(\frac 1 {t^2} -\frac 1 {2t} +\frac 1 {12}\right)\frac 1{e^{t/a}-1}+ \frac 1 {e^t-1}\left[\frac 1 {2t}-\frac a 4 \csch^2\frac t 2 -\frac 1 {12} \left(a+\frac 1 a \right)\right]\right)\,dt
\\
=\frac 1 {12}\left(a+3+\frac 1 {a}\right)\log a+\left(\frac {1-\gamma}{12} -\zeta'_R(-1)\right)\frac 1 a -\frac 5 {24}a. 
\end{aligned}
\end{equation}
We also note that the well known identity $\zeta(n)(n-1)!=\int_0^\infty \frac {t^{n-1}\,dt}{e^t-1}$ gives
\begin{equation}\label{Jexp3}
\int_0^\infty  \frac {B_{2k}}{(2k)!} t^{2k-2}\frac 1{e^{t/a}-1}\,dt = \frac {B_{2k}}{(2k)!} a^{2k-1} \int_0^\infty  \frac {t^{2k-2}}{e^{t}-1}\,dt=  \frac {B_{2k}\zeta_R(2k-1)}{2k(2k-1)} a^{2k-1}.
\end{equation}

Now~\eqref{Jexp}--\eqref{Jexp3}  lead to the estimate 
$$
\begin{aligned}
\left|J(a)-\frac 1 {12}\left(a+3+\frac 1 {12}\right)\log a-\left(\frac {1-\gamma}{12} -\zeta'_R(-1)\right)\frac 1 a +\frac 5 {24}a\right.\qquad\qquad\qquad
\\
-\left. \sum_{k=2}^{N-1}\frac {B_{2k}\zeta_R(2k-1)}{2k(2k-1)} a^{2k-1}\right|\leq  \frac {|B_{2N}\zeta_R(2N-1)|}{2N(2N-1)}a^{2N-1}.
\end{aligned}
$$
This together with~\eqref{pst}   completes  the proof of~\eqref{ZBE1}.

In order to prove~\eqref{ZBE2} we first write
\begin{equation}\label{Aug25}
\begin{aligned}
\zeta_B'(0;a,1,1)-\left(\frac 1 {12} -\zeta'_R(-1)\right)\frac 1 a +\frac 1 4 \log (2\pi)-\frac {\gamma a}{12}
\\
=\int_0^\infty\left( \frac t {e^t-1}-1+\frac 1 2 t-\frac 1 {12} t^2 \right)\frac 1{t^2(e^{t/a}-1)}\,dt
\\
=\int_0^\infty\frac { F(at)}{t(e^{t}-1)}\,dt,
\end{aligned}
\end{equation}
where we introduced the notation
\begin{equation}\label{Aug252}
F(r)=  \frac 1 {e^{r}-1}-\frac 1{r}+\frac 1 2 -\frac 1 {12} r \left(=\sum_{k=2}^{\infty} \frac {B_{2k}}{(2k)!} r^{2k-1}\right).
\end{equation}
Since $|F'(r)/r|<1$ and $|F''(r)|<1$ we have 
$$
\int_0^\infty\left| \frac {\partial}{\partial a} F(at)\right|\frac 1{t(e^{t}-1)}\,dt=  \int_0^\infty  |F'(at)|\frac 1{e^{t}-1}\,dt< \int_0^\infty  \frac {at}{e^{t}-1}\,dt=\frac {\pi^2} 6 a
$$
$$
\int_0^\infty\left | \frac{\partial^2}{\partial a^2} F(at)\right|\frac 1{t(e^{t}-1)}\,dt=  \int_0^\infty  |F''(at)|\frac t{e^{t}-1}\,dt< \int_0^\infty  \frac {t}{e^{t}-1}\,dt=\frac {\pi^2} 6.
$$
This demonstrates that 
$$
\frac{\partial^2}{\partial a^2} \int_0^\infty F(at)\frac 1{t(e^{t}-1)}\,dt=\int_0^\infty \frac{\partial^2}{\partial a^2}F(at)\frac 1{t(e^{t}-1)}\,dt.
$$
Now as a consequence of~\eqref{Aug25} we  conclude that 
\begin{equation}\label{TZBE2}
\frac{\partial^2}{\partial a^2}\zeta'_B(0;a,1,1)-2\left(\frac 1 {12} -\zeta'_R(-1)\right)\frac 1 {a^3}=\int_0^\infty \frac{\partial^2}{\partial a^2}F(at)\frac 1{t(e^{t}-1)}\,dt.
\end{equation}
For $at\geq 0$ the second partial derivative $\frac{\partial^2}{\partial a^2}F(at)=t^2 F''(at)$ can be written as the following convergent alternating series:
$$
 \frac{\partial^2}{\partial a^2}F(at)= t^2\sum_{k=2}^{\infty} \frac {B_{2k}}{2k(2k-3)!}  (at)^{2k-3}. 
$$
The Leibniz's test gives
$$
\left| \frac{\partial^2}{\partial a^2}F(at)-  \sum_{k=2}^{N-1} \frac {B_{2k}}{2k(2k-3)!}  a^{2k-3}  t^{2k-1} \right|\leq \frac {|B_{2N}|}{2N(2N-3)!}  a^{2N-3}  t^{2N-1}.
$$
This together with~\eqref{TZBE2} leads to~\eqref{ZBE2} (in the same way as in the proof of~\eqref{ZBE1}).
\end{proof}
\begin{lemma}\label{14:34}
 For the Barnes double zeta function  $\zeta_B(s;a,b,x)$ we have 
$$
\frac{\partial}{\partial a}\{2\zeta_B'(0;a,1,1)+\zeta_B'(0;1-2a,1,1)\}=0\quad\text{for}\quad a=1/3.
$$
As before the prime stands for the derivative with respect to $s$. 
\end{lemma}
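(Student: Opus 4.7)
The proof will be essentially a one-line observation based on the chain rule combined with a symmetry at the point $a=1/3$. The plan is as follows.

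First I would introduce the shorthand $g(x) := \zeta_B'(0;x,1,1)$, so that the function under consideration is
\begin{equation*}
F(a) := 2\zeta_B'(0;a,1,1)+\zeta_B'(0;1-2a,1,1) = 2g(a)+g(1-2a).
\end{equation*}
By the asymptotics \eqref{BAS} (or the integral representation \eqref{pst}), the map $x\mapsto g(x)$ is real analytic on $(0,\infty)$, so the ordinary chain rule applies and
\begin{equation*}
F'(a) = 2g'(a)-2g'(1-2a).
\end{equation*}

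The key observation is that at the distinguished point $a=1/3$ we have $1-2a = 1-2/3 = 1/3$, so both arguments of $g'$ coincide. Substituting therefore gives
\begin{equation*}
F'(1/3) = 2g'(1/3)-2g'(1/3) = 0,
\end{equation*}
which is exactly the claimed identity.

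There is no real obstacle here; the whole content is the arithmetic coincidence $1-2\cdot\tfrac13=\tfrac13$, which makes the symmetry $a\leftrightarrow 1-2a$ fix the point $a=1/3$. The only thing worth noting, for completeness, is that the smoothness of $a\mapsto \zeta_B'(0;a,1,1)$ on $(0,\infty)$ needed to apply the chain rule follows from any of the standard descriptions of this function (e.g.\ the series \eqref{BAS} for small $a$, together with analytic continuation, or directly from the integral representation \eqref{pst}).
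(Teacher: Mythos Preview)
Your proof is correct and follows essentially the same approach as the paper: both arguments reduce to the chain-rule identity $F'(a)=2g'(a)-2g'(1-2a)$ together with the coincidence $1-2\cdot\tfrac13=\tfrac13$. The paper additionally records the explicit formula $g'(a)=-\bigl(\tfrac{1}{12}-\zeta_R'(-1)\bigr)a^{-2}+\tfrac{\gamma}{12}+\int_0^\infty F'(at)(e^t-1)^{-1}\,dt$ (obtained by differentiating the integral representation, as in the proof of the preceding lemma) before invoking this symmetry, but the substance is identical.
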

\begin{proof} As in the proof of Lemma~\ref{BUE} we conclude that 
 $$
\frac{\partial}{\partial a} \zeta_B'(0;a,1,1)= -\left(\frac 1 {12} -\zeta'_R(-1)\right)\frac 1 {a^2}+\frac {\gamma }{12}+\int_0^\infty \frac {F'(at)}{e^{t}-1}\,dt,\quad a>0,
$$
where $F(r)$ is the same as in~\eqref{Aug252}; cf.~\eqref{Aug25}. This immediately implies the assertion. 
\end{proof}

\begin{remark}[Particular values of Barnes double zeta derivative]\label{Barnes}
If $a$ is a rational number, then the value of the derivative $\zeta'_B(0;a,1,1)$  can be expressed  in the following way:
\begin{equation}\label{ZB}
 \begin{aligned}
\zeta'_B(0;p/q,1,1)=&\frac 1 {pq}\left(\zeta_R'(-1)-\frac{\log q}{12}\right)
+\left(S(q,p)+\frac 1 4 \right)\log{\frac{q}{p}}
\\
+\sum_{k=1}^{p-1}\left(\frac 1 2 -\frac k p \right)&\log \Gamma\left(  \left(\!\!\!\left(\frac{kq}{p}\right)\!\!\!\right)+\frac 1 2 \right)+\sum_{j=1}^{q-1}\left(\frac 1 2 -\frac j q \right)\log \Gamma\left(\left(\!\!\!\left(\frac{jp}{q}\right)\!\!\!\right)+\frac 1 2\right).
\end{aligned}
\end{equation}
Here  $p$ and $q$ are coprime natural numbers, $S(q,p)=\sum_{j=1}^{p}\left(\!\!\left(\frac{j}{p}\right)\!\!\right) \left(\!\!\left(\frac{jq}{p}\right)\!\!\right)$ is the Dedekind sum, and the symbol $(\!(\cdot)\!)$ is defined so that   $(\!(x)\!)=x-\lfloor x\rfloor-1/2$ for $x$ not an integer and $(\!(x)\!)=0$ for $x$ an integer; for details we refer to~\cite[Appendix A]{Kalvin Pol-Alv}. In particular, $\zeta'_B(0;1,1,1)=\zeta_R'(-1)$ and for the reciprocals of the natural numbers the general formula~\eqref{ZB} simplifies to
$$
\zeta_B'\left(0; 1 /q,1,1\right)=\frac 1 q \zeta'_R(-1)-\frac 1 {12q}\log q-\sum_{j=1}^{q-1}\frac j q\log\Gamma\left(\frac j q\right)+\frac{q-1}4 \log 2\pi.
$$
\end{remark}

In the context of this paper Remark~\ref{Barnes} becomes extremely helpful if $\beta$ is a rational number, i.e. the angles of triangles $ABC$ and $CBA'$ are rational multiples of $\pi$, cf.~Fig.~\ref{Template}. 
Thus by using~\eqref{logdet}  together with~\eqref{ZB}   for the envelope of unit area glued from two right isosceles triangles  we obtain
$$
\log\det\Delta_{-3/4}=\frac 1 4 \log \pi-\log\Gamma\left(\frac 3 4\right)\approx 0.0829.
$$
Similarly, for the equilateral triangle envelope of unit area  we get
\begin{equation}\label{CritVal}
\log\det\Delta_{-2/3}=\frac 2 3 \log \pi  +\frac 1 3 \log \frac 2 3      -2\log\Gamma \left(\frac 2 3 \right)\approx 0.0217;
\end{equation}
 for the envelope of unit area glued from two congruent  isosceles triangles with interior angles $\pi/6$, $2\pi/3$ and $\pi/6$ we have
 $$ 
 \begin{aligned}
\log\det\Delta_{-5/6}=\frac 1 6\log \pi  +\frac{37}{72}\log 2 +\frac 1{48}\log 3 +\zeta_R'(-1) -\frac 1 4 \log \Gamma\left(\frac 2 3\right)\approx0.3287.
\end{aligned}
$$
On Fig.~\ref{F1} and Fig.~\ref{EnvCritArea} all points marked by the diamond symbol were found in exactly the same way. Let us also note that  for  some exceptional values of $\beta$ it is possible to find all eigenvalues of the Friederichs Laplacian $\Delta^S_\beta$ and then the corresponding zeta regularized spectral determinant, see~\cite[Section B]{Au-Sal 2}.

\vspace{.5cm}
\noindent{\it Acknowledgements.} The author would like to thank Dr.~Alexey Kokotov (Concordia University), Dr.~Werner M\"uller (University of Bonn), and  Dr.~Paul Wiegmann (University of Chicago) for valuable discussions.

\end{document}